\definecolor{mygray}{RGB}{211,215,207}
\definecolor{lightgray}{RGB}{215,215,215}
\newtheorem{theorem}{Theorem}[section]
\newtheorem{corollary}[theorem]{Corollary}
\newtheorem{lemma}[theorem]{Lemma}
\newtheorem{proposition}[theorem]{Proposition}
\theoremstyle{definition}
\newtheorem{definition}[theorem]{Definition}
\newtheorem{remark}[theorem]{Remark}
\numberwithin{equation}{section}
\definecolor{mygray}{RGB}{211,215,207}
\definecolor{lightgray}{RGB}{215,215,215}
\DeclareMathOperator{\supp}{supp}
\begin{document}

\title[Atomic decomposition in the Dunkl setting ]{Remark on atomic decompositions \\
for Hardy space $H^1$\\
 in the rational Dunkl setting}

\author[ J. Dziuba\'nski and A. Hejna]{Jacek Dziuba\'nski and Agnieszka Hejna}

\date{}

\address{J. Dziuba\'nski and A. Hejna, Uniwersytet Wroc\l awski,
Instytut Matematyczny,
Pl. Grunwaldzki 2/4,
50-384 Wroc\l aw,
Poland}
\email{jdziuban@math.uni.wroc.pl}
\email{hejna@math.uni.wroc.pl}

\thanks{The research supported by the National Science Centre, Poland (Narodowe Centrum Nauki), Grant 2017/25/B/ST1/00599.
}

\maketitle


\renewcommand{\thefootnote}{}

\footnote{2010 \emph{Mathematics Subject Classification}: Primary\,: 42B30.
Secondary\,: 42B25, 35K08, 42B25,  42B35, 42B37.}

\footnote{\emph{Key words and phrases}: Dunkl operator,  Hardy spaces, maximal operator, atomic decomposition.}

\renewcommand{\thefootnote}{\arabic{footnote}}
\setcounter{footnote}{0}


\begin{abstract}
Let $\Delta$  be the Dunkl Laplacian on $\mathbb R^N$ associated with a  normalized root system $R$ and a multiplicity function $k(\alpha)\geq 0$. We say that a function $f$ belongs to the Hardy space $H^1_{\Delta}$  if the nontangential maximal function defined by $\mathcal M_H f(\mathbf x)=\sup_{\| \mathbf x-\mathbf y\|<t} |\exp(t^2\Delta )f(\mathbf x)|$ belongs to $L^1(w(\mathbf x)\, d\mathbf x)$, where $w(\mathbf x)=\prod_{\alpha\in R} |\langle \alpha,\mathbf x\rangle|^{k(\alpha)}$. We prove that $H^1_\Delta$ admits atomic decompositions into atoms in the sense of Coifman--Weiss  on the space of homogeneous type $\mathbb{R}^N$ equipped with the Euclidean distance $\|\mathbf{x}-\mathbf{y}\|$ and the measure $w(\mathbf{x})d\mathbf{x}$. To this end we improve estimates for the heat kernel of $e^{t\Delta}$.
\end{abstract}

\section{Introduction}
Let $\Delta$  be the Dunkl Laplacian on $\mathbb R^N$ associated with a reduced normalized root system $R$ and a multiplicity function $k(\alpha)\geq 0$.
We denote by $dw(\mathbf x)=w(\mathbf x)\, d\mathbf x$, where
 \begin{equation}\label{eq:w}
 w(\mathbf x)=\prod_{\alpha\in R} |\langle \alpha,\mathbf x\rangle|^{k(\alpha)},
 \end{equation}
 is the associated measure on $\mathbb R^N$. Let
 $H_t=e^{t\Delta}$ be the Dunkl heat semigroup. The operators $H_t$ form strongly continuous semigroups of linear contractions on $L^p(dw)$ for $1\leq p<\infty$, which are self-adjoint operators on $L^2(dw)$. Moreover, the maximal operator $\sup_{t>0} |\exp(t\Delta)f(\mathbf x)|$ is bounded from $L^1(dw)$ into $L^{1,\infty}(dw)$ (see \cite[Theorems 6.1 and 6.2]{TX}).

 We say that an $L^1(dw)$-function $f$ belongs to the real Hardy space $H^1_{\Delta}$  if the nontangential maximal function
\begin{align*}
 \mathcal M_H f(\mathbf x)=\sup_{\| \mathbf x-\mathbf y\|<t} |\exp(t^2\Delta )f(\mathbf y)|
\end{align*}
belongs to $L^1(dw)$. The space $H^1_{\Delta}$ is a Banach space with the norm
\begin{align*}
\| f\|_{H^1_{{\rm max}, H}}=\| \mathcal M_H f\|_{L^1(dw)}.
\end{align*}
  In \cite{conjugate} characterizations  of $H^1_{\Delta}$  by relevant Riesz transforms, Littlewood-Paley square functions, and atomic decompositions were proved. Let us recall the notions of atoms considered in \cite{conjugate}. For  a positive integer $M$, let $\mathcal D(\Delta^M)$ denote the domain of $\Delta^M$ as an (unbounded) operator on $L^2(dw)$.  Let $G$ be the Weyl group of the root system $R$. Set $\mathcal O(\mathbf x)=\bigcup_{\sigma\in G} \{\sigma(\mathbf x)\}$.  Similarly, if $B$ is a Euclidean ball then $\mathcal O(B)=\bigcup_{\sigma\in G} \sigma (B)$ is the $G$-orbit of $B$.
  \begin{definition}\label{def:atom} \normalfont Let $1<q\leq \infty$ and $M$ be a positive integer.  A function $a(\mathbf x)$ is said to be a $(1,q,\Delta, M)$-atom if
 $a\in L^2(dw)$ and there is $ b\in \mathcal D(\Delta^M)$ and a Euclidean ball $ B=B(\mathbf y_0,r)$  such

$\bullet$ $a=\Delta^Mb$;

$\bullet$ $\supp\Delta^\ell b\subset \mathcal O(B)$ for $\ell=0,1,2,...,M$;

$\bullet$ $ \| (r^2\Delta)^{\ell}b\|_{L^q(dw)} \leq r^{2M}w(B)^{\frac{1}{q}-1}$, $\ell=0,1,...,M$.
\end{definition}

\begin{definition}\label{def:atomic_space} \normalfont A function $f$ belongs to $H^1_{(1,q,\Delta, M)}$ if there are $(1,q,\Delta, M)$-atoms  $a_j$ and $\lambda_j\in \mathbb C$ such that $f=\sum_{j=1}^{\infty} \lambda_ja_j$ and $\sum_{j=1}^{\infty}|\lambda_j|<\infty$. Then we set
$ \| f\|_{H^1_{(1,q,\Delta, M)}} =\inf \Big\{ \sum_{j=1}^{\infty} |\lambda_j|\Big\}, $
where the infimum is taken over all representations of $f$ as above.
\end{definition}
It was proved in \cite{conjugate} that the spaces $H^1_{\Delta}$ and $H^1_{(1,q,\Delta, M)}$ coincide and the corresponding norms are equivalent.

Let us note that the atoms considered  in~\cite{conjugate} (see Definition \ref{def:atom}) are in the spirit of~\cite{HMMLY}, which means that  they are of the form $a=\Delta^Mb$ for appropriate functions $b$. Our aim  is to prove that the Hardy space $H^1_{\Delta}$ admits other atomic decompositions, namely into atoms in the sense of Coifman--Weiss~\cite{CW} on the space of homogeneous type $(\mathbb R^N, \| \mathbf x-\mathbf y\|, dw)$.

 \begin{definition}\label{def:C-W-atoms} \normalfont
 Fix $1<q\leq \infty$.  A function $a(\mathbf x)$ is a $(1,q)$-atom if there is a Euclidean ball $B$ such that
\begin{enumerate}[(A)]
\item{$\supp a\subset B$;}\label{numitem:support}
\item{$\| a\|_{L^q(dw)}\leq w( B)^{\frac{1}{q}-1}$;}\label{numitem:size}
\item{$\int a(\mathbf x)\, dw(\mathbf x)=0$.}\label{numitem:cancel}
\end{enumerate}
 \end{definition}

\begin{definition}\label{def:space_C-W} \normalfont A function $f$ belongs to $H^1_{(1,q)}$ if there are $\lambda_j\in \mathbb C$ and $(1,q)$-atoms $a_j$ such that $f=\sum_{j=1}^{\infty} \lambda_ja_j$ and $\sum_{j=1}^{\infty}|\lambda_j|<\infty$. Then
$$ \| f\|_{H^1_{(1,q)}} =\inf \Big\{ \sum_{j=1}^{\infty} |\lambda_j|\Big\}, $$
where the infimum is taken over all representations of $f$ as above.
\end{definition}

We are now in a position to state our main result.

\begin{theorem}\label{teo:main1} For every $1<q\leq \infty$ the spaces $H^1_{\Delta}$ and $H^1_{(1,q)}$ coincide and the corresponding norms are equivalent, that is, there is a constant $C>0$ such that
\begin{align}\label{eq:main}
C^{-1} \| f\|_{H^1_{(1,q)}}\leq \| f\|_{H^1_{{\rm max}, H}}\leq C \| f\|_{H^1_{(1,q)}}.
\end{align}
\end{theorem}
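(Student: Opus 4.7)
My plan is to prove the two inequalities in \eqref{eq:main} separately. For the bound $\|f\|_{H^1_{\mathrm{max},H}}\lesssim \|f\|_{H^1_{(1,q)}}$, it suffices to verify that every Coifman--Weiss $(1,q)$-atom $a$ supported in $B=B(\mathbf{y}_0,r)$ satisfies $\|\mathcal{M}_H a\|_{L^1(dw)}\lesssim 1$ uniformly. I would split
\[
\int \mathcal{M}_H a\,dw=\int_{\mathcal{O}(B^*)}\mathcal{M}_H a\,dw+\int_{\mathbb{R}^N\setminus \mathcal{O}(B^*)}\mathcal{M}_H a\,dw,
\]
with $B^*$ a fixed enlargement of $B$. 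On $\mathcal{O}(B^*)$ I would combine the trivial bound $w(\mathcal{O}(B^*))\lesssim w(B)$ with H\"older's inequality and the $L^q(dw)$-boundedness of $\mathcal{M}_H$ (obtained by interpolating between the $L^\infty$ bound and the weak-$(1,1)$ estimate of Thangavelu--Xu recalled in the introduction). On the complement, I would use the cancellation $\int a\,dw=0$ to rewrite $\exp(t^2\Delta)a(\mathbf{y})$ as $\int\bigl(h_{t^2}(\mathbf{y},\mathbf{z})-h_{t^2}(\mathbf{y},\mathbf{y}_0)\bigr)a(\mathbf{z})\,dw(\mathbf{z})$ and then invoke the refined heat-kernel estimates promised in the abstract, namely H\"older regularity in the second variable at the Euclidean scale together with Gaussian-type decay in the \emph{Euclidean} distance $\|\mathbf{y}-\mathbf{z}\|$, to produce an integrable pointwise majorant.

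For the converse $\|f\|_{H^1_{(1,q)}}\lesssim \|\mathcal{M}_H f\|_{L^1(dw)}$, the identification $H^1_\Delta=H^1_{(1,q,\Delta,M)}$ from \cite{conjugate} reduces matters to showing that each $(1,q,\Delta,M)$-atom $a=\Delta^M b$ with $\operatorname{supp}\Delta^\ell b\subset \mathcal{O}(B)$ has uniformly bounded $H^1_{(1,q)}$-norm. I would split into two geometric cases. If $\mathcal{O}(B)$ is contained in a single Euclidean ball $\widetilde B$ with $w(\widetilde B)\lesssim w(B)$ (which happens, for instance, when $\mathbf{y}_0$ is close to a root hyperplane so that the reflected copies cluster), then the cancellation $\int a\,dw=\int\Delta^M b\,dw=0$ makes $a$ itself a scalar multiple of a single $(1,q)$-atom on $\widetilde B$. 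Otherwise, the orbit copies $\sigma(B)$, $\sigma\in G$, are mutually far apart; I would fix a smooth partition of unity $\{\phi_\sigma\}_{\sigma\in G}$ subordinate to these copies, write $a=\sum_\sigma\phi_\sigma a$, and correct the means $c_\sigma=\int\phi_\sigma a\,dw$ by subtracting scalar multiples of unit-mass bumps $\eta_\sigma$ on $\sigma(B)$. Each corrected piece $\phi_\sigma a-c_\sigma\eta_\sigma$ is then a Coifman--Weiss $(1,q)$-atom on $\sigma(B)$ up to a bounded factor, and the residual $\sum_\sigma c_\sigma\eta_\sigma$ satisfies $\sum_\sigma c_\sigma=0$, so it telescopes into a sum of bump differences $\eta_i-\eta_j$.

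The main obstacle I expect is atomizing these bump differences when the orbit centers $\sigma_i(\mathbf{y}_0)$, $\sigma_j(\mathbf{y}_0)$ are separated by a Euclidean distance much larger than $r$: a na\"ive chain of overlapping balls of radius $r$ joining them would have length of order $|\mathbf{y}_0|/r$ and would destroy the uniform atomic bound, since the individual $|c_\sigma|$ are only of order one. The resolution, and the reason the improved heat-kernel estimates are so central to the whole argument, is to route such chains through neighborhoods of the root hyperplanes where $w$ degenerates, so that successive intermediate balls remain comparable in $w$-measure and the total telescoping cost is controlled by a constant depending only on the root system $R$ and the multiplicity $k$. Combining this structural argument with Step~1 then yields the equivalence \eqref{eq:main}.
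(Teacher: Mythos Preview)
Your treatment of the inclusion $H^1_{(1,q)}\subseteq H^1_\Delta$ is essentially the paper's, and correct; note only that on the complement of $\mathcal O(B^*)$ the paper uses just the older estimates \eqref{eq:DtDxDyHeat} and \eqref{eq:heat_Holder} (decay in the \emph{orbit} distance $d(\mathbf x,\mathbf y)$), not the improved Euclidean decay.

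The gap is in the converse. Starting from a $(1,q,\Delta,M)$-atom one indeed obtains, after your cutoff-and-correct step, bump differences $c_\sigma(\eta_{\sigma}-\eta_{e})$, but with $|c_\sigma|\lesssim 1$ and no better in general. Any chain of balls of radius $\sim r$ joining $\sigma(\mathbf y_0)$ to $\mathbf y_0$ has length $\sim \|\sigma(\mathbf y_0)-\mathbf y_0\|/r$, and this count does not shrink by routing the path through root hyperplanes: the $w$-measure of the intermediate balls is irrelevant, since each bump difference is already a $(1,q)$-atom of unit atomic norm regardless of where it sits. Thus the telescoping cost is of order $\|\sigma(\mathbf y_0)-\mathbf y_0\|/r$, which is unbounded. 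Dyadically inflating the balls only brings this down to $\log(\|\sigma(\mathbf y_0)-\mathbf y_0\|/r)$, still unbounded. So the proposed ``route through the hyperplanes'' does not close the argument.

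The paper's resolution is different in a decisive way and does not go through $(1,q,\Delta,M)$-atoms. It uses the square-function identification $H^1_\Delta=H^1_{\mathrm{square}}$ together with the tent-space atomic decomposition to write $f=\sum_j\lambda_j\,\pi_\Psi A_j$, where each $A_j$ is a $T_2^1$-atom on a tent $\widehat{B(\mathbf y_0,r)}$. The improved heat-kernel bound (Theorem~\ref{teo:heat_new}, via Corollary~\ref{coro:corollary}) yields $|\Psi_t(\mathbf x,\mathbf y)|\lesssim (1+\|\mathbf x-\mathbf y\|/t)^{-2}V(\mathbf x,\mathbf y,t)^{-1}$, and from this one proves (Lemmas~\ref{lem:pointwise_T12_atom} and~\ref{lem:L^2}) that the restriction of $g=\pi_\Psi A$ to a far orbit copy $B(\sigma(\mathbf y_0),r)$ satisfies
\[
\|g\|_{L^2(B(\sigma(\mathbf y_0),r),dw)}\lesssim \Big(\frac{r}{\|\sigma(\mathbf y_0)-\mathbf y_0\|}\Big)^{2} w(B)^{-1/2},
\qquad
|c_\sigma|\lesssim \Big(\frac{r}{\|\sigma(\mathbf y_0)-\mathbf y_0\|}\Big)^{2}.
\]
This extra $(r/\|\sigma(\mathbf y_0)-\mathbf y_0\|)^{2}$ exactly cancels the chain length, so a \emph{straight-line} chain of $\sim\|\sigma(\mathbf y_0)-\mathbf y_0\|/r$ balls already gives a uniformly bounded atomic sum (Proposition~\ref{propo:main}). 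It is this smallness of $|c_\sigma|$, coming from the Euclidean factor $(1+\|\mathbf x-\mathbf y\|/\sqrt t)^{-2}$ in Theorem~\ref{teo:heat_new}, and not any hyperplane geometry, that makes the argument work.
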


\begin{remark}\normalfont Since every $(1,\infty)$-atom is a $(1,q)$-atom, it suffices to prove the first inequality in~\eqref{eq:main} for $q=\infty$.
\end{remark}

In order to prove the theorem we first derive improvements of estimates obtained in \cite{conjugate} of the heat kernel of the semigroup $e^{t\Delta}$ and, consequently, of  other  kernels associated with translations of radial functions. This is presented  in Section \ref{sec:further}. Then, in Section \ref{sec:SectionDecomp},  we  use a characterization of  $H^1_{\Delta}$ by Littlewood-Paley square functions to obtain decomposition into $(1,2)$-atoms. Finally, $(1,\infty)$-atomic decomposition is achieved by a~standard decomposition of $(1,2)$-atoms into $(1,\infty)$-atoms.

Let us remark, that if $k\equiv 0$, then the Hardy space $H^1_\Delta$ coincides with the classical real Hardy space $H^1$ on the Euclidean space $\mathbb R^N$ studied originally by Stein and Weiss~\cite{SW}, Fefferman and Stein~\cite{FS},  and Coifman \cite{Coifman}. More information concerning the classical theory of $H^p$ spaces can be found in the book~\cite{St2} and references therein.

\section{Preliminaries}
In this section we present basic facts concerning theory of the Dunkl operators.  For details we refer the reader to~\cite{Dunkl},~\cite{dJ}, ~\cite{Roesler3}, and~\cite{Roesler-Voit}.

We consider the Euclidean space $\mathbb R^N$ with the scalar product $\langle\mathbf x,\mathbf y\rangle=\sum_{j=1}^N x_jy_j
$, $\mathbf x=(x_1,...,x_N)$, $\mathbf y=(y_1,...,y_N)$. For
 a nonzero vector $\alpha\in\mathbb R^N$ the reflection $\sigma_\alpha$ with respect to the hyperplane $\alpha^\perp$ orthogonal to $\alpha$ is given by
\begin{equation}\label{eq:reflection}
\sigma_\alpha\mathbf x=\mathbf x-2\frac{\langle \mathbf x,\alpha\rangle}{\| \alpha\| ^2}\alpha.
\end{equation}
In this paper we fix a normalized root system in $\mathbb R^N$, that is, a finite set  $R\subset \mathbb R^N\setminus\{0\}$ such that   $\sigma_\alpha (R)=R$ and $\|\alpha\|=\sqrt{2}$ for every $\alpha\in R$. The finite group $G$ generated by the reflections $\sigma_\alpha$, $\alpha\in R$, is called the {\it Weyl group} ({\it reflection group}) of the root system. A~{\textit{multiplicity function}} is a $G$-invariant function $k:R\to\mathbb C$ which will be fixed and $\geq 0$  throughout this paper.

Let
\begin{equation}\label{eq:gamma}
\gamma =\frac{1}{2}\sum_{\alpha \in R} k(\alpha) \text{ and } \mathbf N=2\gamma +N.
\end{equation}
The number $\mathbf N$ is called the homogeneous dimension of the system, since
$$ w(B(t\mathbf x, tr))=t^{\mathbf N}w(B(\mathbf x,r)) \ \ \text{\rm for } \mathbf x\in\mathbb R^N, \ t,r>0,$$
where here and subsequently $B(\mathbf x, r)$ denotes the Euclidean ball centered at $\mathbf x$ with radius $r>0$. Observe that
\begin{equation}\label{eq:behavior} w(B(\mathbf x,r))\sim r^{N}\prod_{\alpha \in R} (|\langle \mathbf x,\alpha\rangle |+r)^{k(\alpha)},
\end{equation}
so $dw(\mathbf x)$ it is doubling, that is, there is a constant $C>0$ such that
\begin{equation}\label{eq:doubling} w(B(\mathbf x,2r))\leq C w(B(\mathbf x,r)) \ \ \text{\rm for } \mathbf x\in\mathbb R^N, \ r>0.
\end{equation}
Moreover, by \eqref{eq:behavior},
\begin{equation}\label{eq:growth}
C^{-1} \left(\frac{r_2}{r_1}\right)^{ N}\leq \frac{w(B(\mathbf x, r_2))}{w(B(\mathbf x, r_1))}\leq C \left(\frac{r_2}{r_1}\right)^{\mathbf N}\ \ \text{ for } 0<r_1<r_2.
\end{equation}

Given a (normalized) root system $R$ and a multiplicity function $k(\alpha)$ the {\it Dunkl operator} $T_\xi$  is the following $k$-deformation of the directional derivative $\partial_\xi$ by a  difference operator:

\begin{align*}
 T_\xi f(\mathbf x)= \partial_\xi f(\mathbf x) + \sum_{\alpha\in R} \frac{k(\alpha)}{2}\langle\alpha ,\xi\rangle\frac{f(\mathbf x)-f(\sigma_\alpha \mathbf x)}{\langle \alpha,\mathbf x\rangle}.
\end{align*}

The Dunkl operators $T_\xi$ were introduced in \cite{Dunkl}. They commute and are skew-symmetric in $L^2(\mathbb R^N,dw)$. Moreover, if $f,g\in C^1(\mathbb R^N)$ and at least one of them is $G$-invariant, then
\begin{equation}\label{eq:Leibniz}
T_\xi (fg)=(T_\xi f)\cdot g+f \cdot (T_\xi g).
\end{equation}
Let $e_j$, $j=1,2,...,N$, denote the canonical orthonormal basis in $\mathbb R^N$ and let $T_j=T_{e_j}$.

\noindent
{\bf Dunkl kernel and Dunkl transform.} For fixed $\mathbf y\in\mathbb R^N$ the {\it Dunkl kernel} $E(\mathbf x,\mathbf y)$ is a unique solution of the system
$$ T_\xi f=\langle \xi,\mathbf y\rangle f, \ \ f(0)=1.$$
In particular
\begin{equation}\label{eq:T_j_x} T_{j,\mathbf x} E(\mathbf x,\mathbf y)=y_jE(\mathbf x,\mathbf y),
\end{equation}
where here and subsequently  $T_{j,\mathbf x}$  denotes the action of $T_j$ with respect to the variable $\mathbf x$.

The function $E(\mathbf x ,\mathbf y)$ was introduced in~\cite{Dunkl1991}. It  generalizes the exponential function
 $e^{\langle \mathbf x,\mathbf y\rangle}$ and has a unique extension to a holomorphic function on
 $\mathbb C^N\times \mathbb C^N$.
 We have
\begin{enumerate}[(a)]
\item {$E(\lambda \mathbf x,\mathbf y)=E(\mathbf x,\lambda \mathbf y)=E(\lambda  \mathbf y,\mathbf x)=E(\lambda \sigma(\mathbf x),\sigma(\mathbf y))$ for all  $\mathbf x,\mathbf y\in \mathbb C^N$, $\sigma \in G$, $\lambda \in \mathbb C$;}\label{numitem:a}
\item
$E(\mathbf x,\mathbf y)>0$ for all $\mathbf x,\mathbf y\in\mathbb R^N$;
\item
$|E(-i\mathbf x,\mathbf y)|\leq 1 $ for all $\mathbf x,\mathbf y\in\mathbb R^N$;
\item
$E(0,\mathbf y)=1$ for all $\mathbf y\in \mathbb C^N$.
\end{enumerate}
Proofs of the equalities in~\eqref{numitem:a} can be found in~\cite{Dunkl1991}. The further properties listed above are direct consequences of~\cite[Proposition 5.1]{Roesler1999}. More details concerning the Dunkl kernel $E(\mathbf x,\mathbf y)$ can be found in the lecture notes~\cite{Roesler3},~\cite{Roesler-Voit} and references therein.

The {\it Dunkl transform}, which generalizes the classical Fourier transform, is defined on $L^1({dw})$ by (see \cite{dJ}, \cite{Roesler-Voit})
\begin{align*}
\mathcal{F} f(\xi)=c_k^{-1}\int_{\mathbb{R}^N}f(\mathbf{x})E(\mathbf{x},-i\xi)\, dw(\mathbf{x}),
\end{align*}
where
$$c_k =\int_{\mathbb R^N} e^{-\|\mathbf x\|^2\slash 2}\, dw(\mathbf x).$$
\smallskip
The Dunkl transform, which  is a topological automorphisms of the Schwartz space $\mathcal{S}(\mathbb{R}^N)$, has a unique  extension to  an isometric automorphism of $L^2(dw)$ and satisfies the following inversion formula, see \cite{dJ}.
For every $f\in L^1({dw})$ such that $\mathcal{F}f\in L^1({dw})$, we have
\begin{align*}
f(\mathbf{x})=(\mathcal{F})^2f(-\mathbf{x})
\text{ for all }\mathbf{x}\in\mathbb{R}^N.
\end{align*}
For $\lambda >0$, we have
$
\mathcal{F}(f_\lambda)(\xi)=\mathcal{F}f(\lambda\xi),
$
where
$f_\lambda(\mathbf{x})=\lambda^{-\mathbf{N}}f(\lambda^{-1}\mathbf{x}).$

\smallskip
\noindent
{\bf Dunkl translation{s} and Dunkl convolution.}
The {\it Dunkl translation\/} $\tau_{\mathbf{x}}f$ of a function $f\in\mathcal{S}(\mathbb{R}^N)$ by $\mathbf{x}\in\mathbb{R}^N$ is defined by
\begin{equation}\label{eq:translation}
\tau_{\mathbf{x}} f(\mathbf{y})=c_k^{-1} \int_{\mathbb{R}^N}{E}(i\xi,\mathbf{x})\,{E}(i\xi,\mathbf{y})\,\mathcal{F}f(\xi)\,{dw}(\xi).
\end{equation}
{We list below some  properties of Dunkl translations:
\begin{itemize}
\item[$\bullet$]
each translation $\tau_{\mathbf{x}}$ is a continuous linear map of $\mathcal{S}(\mathbb{R}^N)$ into itself, which extends to a contraction on $L^2({dw})$,
\item[$\bullet$]
(\textit{Identity\/})
$\tau_0=I$;
\item[$\bullet$]
(\textit{Symmetry\/})
$\tau_{\mathbf{x}}f(\mathbf{y})
=\tau_{\mathbf{y}}f(\mathbf{x})\text{ for all }\mathbf{x},\mathbf{y}\in\mathbb{R}^N, f\in\mathcal{S}(\mathbb{R}^N)$;
\item[$\bullet$]
(\textit{Scaling\/})
$\tau_{\mathbf{x}}(f_\lambda)=(\tau_{\lambda^{-1}\mathbf{x}}f)_\lambda \text{ for all }\lambda>0\,,\mathbf{x}
\in\mathbb{R}^N,\,f\in\mathcal{S}(\mathbb{R}^N)$;
\item[$\bullet$] $T_\xi(\tau_{\mathbf x} f)=\tau_{\mathbf x} (T_\xi f)$ for all $\mathbf{x},\xi \in \mathbb{R}^N$;
\item[$\bullet$]
(\textit{Skew--symmetry\/}) for all $\mathbf{x}\in\mathbb{R}^N$ and  $f,g\in\mathcal{S}(\mathbb{R}^N)$ we have
\begin{equation*}\quad
\int_{\mathbb{R}^N}\!\tau_{\mathbf{x}}f(\mathbf{y})\,g(\mathbf{y})\,dw(\mathbf{y})=\int_{\mathbb{R}^N}f(\mathbf{y})\,\tau_{-\mathbf{x}}g(\mathbf{y})\,dw(\mathbf{y}).
\end{equation*}
\end{itemize}
The latter formula allows us to define the Dunkl translations $\tau_{\mathbf{x}}f$ in the distributional sense for $f\in L^p({dw})$ with $1\leq p\leq \infty$. Further,
\begin{equation*}\quad
\int_{\mathbb{R}^N}\!\tau_{\mathbf{x}}f(\mathbf{y})\,dw(\mathbf{y})=\int_{\mathbb{R}^N}f(\mathbf{y})\,dw(\mathbf{y})
\text{ for all }\mathbf{x}\in\mathbb{R}^N,\,f\in\mathcal{S}(\mathbb{R}^N).
\end{equation*}

\smallskip

{The \textit{Dunkl convolution\/} of two reasonable functions (for instance Schwartz functions) is defined by
$$
(f*g)(\mathbf{x})=c_k\,\mathcal{F}^{-1}[(\mathcal{F}f)(\mathcal{F}g)](\mathbf{x})=\int_{\mathbb{R}^N}(\mathcal{F}f)(\xi)\,(\mathcal{F}g)(\xi)\,E(\mathbf{x},i\xi)\,dw(\xi)
$$
for all $\mathbf{x}\in\mathbb{R}^N$ or, equivalently, by}
$$
{(}f*g{)}(\mathbf{x})
=\int_{\mathbb{R}^N}f(\mathbf{y})\,\tau_{\mathbf{x}}g(-\mathbf{y})\,{dw}(\mathbf{y})
=\int f(\mathbf y) g(\mathbf x,\mathbf y)\, dw(\mathbf y),$$
where here and subsequently,
 $$g(\mathbf x,\mathbf y)=\tau_{\mathbf x}g(-\mathbf y)$$ for a reasonable function $g(\mathbf x)$ on $\mathbb R^N$.

\noindent
\textbf{Dunkl heat semigroup.} The {\it Dunkl Laplacian} associated with $G$ and $k$  is the differential-difference operator $\Delta=\sum_{j=1}^N T_{j}^2$. It  acts on $C^2(\mathbb{R}^N)$ functions by
 $$ \Delta f(\mathbf x)=\Delta_{\rm eucl} f(\mathbf x)+\sum_{\alpha\in R} k(\alpha) \delta_\alpha f(\mathbf x),$$
$$\delta_\alpha f(\mathbf x)=\frac{\partial_\alpha f(\mathbf x)}{\langle \alpha , \mathbf x\rangle} -  \frac{f(\mathbf x)-f(\sigma_\alpha \mathbf x)}{\langle \alpha, \mathbf x\rangle^2}.$$
 The operator $\Delta$ is essentially self-adjoint on $L^2(dw)$ and generates the semigroup $H_t=e^{t\Delta}$  of linear self-adjoint contractions on $L^2(dw)$. The semigroup has the form
  \begin{equation}\label{eq:heat_semigroup}
  e^{t\Delta} f(\mathbf x)=\int_{\mathbb R^N} h_t(\mathbf x,\mathbf y)f(\mathbf y)\, dw(\mathbf y),
  \end{equation}
where
 \begin{equation}\label{eq:heat_kernel} \ h_t(\mathbf x,\mathbf y)=c_k^{-1} (2t)^{-\mathbf N\slash 2}
 e^{-(\|\mathbf x\|^2+\|
 \mathbf y\|^2)\slash (4t)}E\left(\frac{\mathbf x}{\sqrt{2t}},\frac{\mathbf y}{\sqrt{2t}}\right),
 \end{equation}
see~\cite[Section 4]{Roesler1998}.
The heat kernel $h_t(\mathbf x,\mathbf y)$ is a $C^\infty$ function of all variables $\mathbf x,\mathbf y \in \mathbb{R}^N$, $t>0$ and satisfies \begin{equation}\label{eq:symmetry} 0<h_t(\mathbf x,\mathbf y)=h_t(\mathbf y,\mathbf x),
  \end{equation}
 \begin{equation}\label{eq:heat_one} \int_{\mathbb R^N} h_t(\mathbf x,\mathbf y)\, dw(\mathbf y)=1.
 \end{equation}
In particular  (see  \cite{Roesler1998}) for every $t>0$ and for every $\mathbf{x},\mathbf{y}\in\mathbb{R}^N$,
\begin{equation}\label{eq:expression1_heat_kernel}
h_t(\mathbf{x},\mathbf{y} )
=\tau_{\mathbf{x}}h_t(-\mathbf{y}),
\ \ \text{\rm
where} \ \
h_t(\mathbf{x})=\tilde{h}_t(\|\mathbf{x}\|)
=c_k^{-1}\,(2t)^{-\mathbf{N}/2}\,e^{-\frac{{\|}\mathbf{x}{\|}^2}{4t}}.
\end{equation}

\noindent
\textbf{Dunkl translations of radial functions.}
{The following specific formula was obtained by R\"osler \cite{Roesler2003}
for the Dunkl translations of (reasonable) radial functions $f({\mathbf{x}})=\tilde{f}({\|\mathbf{x}\|})$\,:}
\begin{equation}\label{eq:translation-radial}
\tau_{\mathbf{x}}f(-\mathbf{y})=\int_{\mathbb{R}^N}{(\tilde{f}\circ A)}(\mathbf{x},\mathbf{y},\eta)\,d\mu_{\mathbf{x}}(\eta)\text{ for all }\mathbf{x},\mathbf{y}\in\mathbb{R}^N.
\end{equation}
{Here}
\begin{equation*}
A(\mathbf{x},\mathbf{y},\eta)=\sqrt{{\|}\mathbf{x}{\|}^2+{\|}\mathbf{y}{\|}^2-2\langle \mathbf{y},\eta\rangle}=\sqrt{{\|}\mathbf{x}{\|}^2-{\|}\eta{\|}^2+{\|}\mathbf{y}-\eta{\|}^2}
\end{equation*}
and $\mu_{\mathbf{x}}$ is a probability measure, which is supported in $\operatorname{conv}\mathcal{O}(\mathbf{x})$.

Let
$$d(\mathbf x,\mathbf y)=\min_{\sigma \in G}\| \sigma (\mathbf x)-\mathbf y\|$$
denote the distance of the orbits $\mathcal O(\mathbf x)$ and $\mathcal O(\mathbf y)$. Since
\begin{equation}\label{eq:A(x,y,eta)>d(x,y)}
A(\mathbf x,\mathbf y,\eta)\geq d(\mathbf x,\mathbf y) \text{ for }\eta\in  \operatorname{conv}\mathcal{O}(\mathbf{x}),
\end{equation}
the formulas~\eqref{eq:expression1_heat_kernel} and \eqref{eq:translation-radial} imply (see e.g., \cite{Roesler-Voit})
\begin{gather}
h_t(\mathbf{x},\mathbf{y})\leq c_k^{-1}\,(2t)^{-\mathbf{N}/2}\,e^{-\frac{d(\mathbf{x},\mathbf{y})^2}{4t}}.
\label{eq:estimateRosler}
\end{gather}
\vspace{-2mm}

For $\mathbf x,\mathbf y\in\mathbb R^N$ and $t>0$
we set
$$V(\mathbf x,\mathbf y,t)=\max\Big(w(B(\mathbf x,t)), w(B(\mathbf y, t))\Big).$$
It was proved in~\cite[Theorem 4.3]{conjugate} that the factor $t^{\mathbf N\slash 2}$ in \eqref{eq:estimateRosler} can be replaced by $V(\mathbf x,\mathbf y,\sqrt{t})$, which gives the following estimates on the heat kernel in the spirit of analysis on spaces of homogeneous type. Let
\begin{align*}
\mathcal G_t(\mathbf x,\mathbf y)&=\frac{1}{V(\mathbf x,\mathbf y,\sqrt{t})}\sum_{\sigma\in G}\exp\Big(\frac{-\| \mathbf x-\sigma( \mathbf y)\|^2}{t}\Big)\\&\sim\frac{1}{V(\mathbf x,\mathbf y,\sqrt{t})}\exp\Big(-\frac {d(\mathbf x,\mathbf y)^2}{t}\Big).
\end{align*}
There are constants $C,c>0$ such that
\begin{equation}\label{heat_radial}
h_t(\mathbf x,\mathbf y)\leq C \mathcal G_{t\slash c}(\mathbf x,\mathbf y).
\end{equation}
Note that $V(\mathbf{x},\mathbf{y},t)$ and $\mathcal{G}_t(\mathbf{x},\mathbf{y})$ are $G$-invariant in $\mathbf{x}$ and $\mathbf{y}$. We list below further inequalities for the kernel $h_t$  proved in \cite[Section 4]{conjugate}. For every nonnegative integer $m$ and for any multi-indices $\alpha,\beta$, there are constants $C,c>0$ such
\begin{equation}\label{eq:DtDxDyHeat2t}
\bigl|\hspace{.25mm}\partial_t^m\partial_{\mathbf{x}}^{\alpha}\partial_{\mathbf{y}}^{\beta}h_t(\mathbf{x},\mathbf{y})\bigr|\le C\,t^{-m-\frac{|\alpha|}2-\frac{|\beta|}2}h_{2t}(\mathbf x,\mathbf y).
\end{equation}
Hence,
\begin{equation}\label{eq:DtDxDyHeat}
\bigl|\hspace{.25mm}\partial_t^m\partial_{\mathbf{x}}^{\alpha}\partial_{\mathbf{y}}^{\beta}h_t(\mathbf{x},\mathbf{y})\bigr|\le C\,t^{-m-\frac{|\alpha|}2-\frac{|\beta|}2}\mathcal G_{t\slash c}(\mathbf x,\mathbf y).
\end{equation}
 Moreover, if $\|\mathbf y-\mathbf y'\|\leq \sqrt{t}$, then
  \begin{equation}\label{eq:heat_Holder}
  \begin{split}|\partial_t^m  h_t(\mathbf{x},\mathbf{y}) & -
 \partial_t^m  h_t(\mathbf{x},\mathbf{y'}) | \leq C t^{-m} \frac{\|\mathbf y-\mathbf y'\|}{\sqrt{t}} \mathcal G_{t\slash c} (\mathbf x,\mathbf y).
  \end{split}\end{equation}

\section{Estimates of the Dunkl heat kernel}\label{sec:further}
The main goal of this section is to improve the estimates~\eqref{eq:DtDxDyHeat} (see Theorem \ref{teo:heat_new}). Then, using Theorem~\ref{teo:heat_new}, we conclude  bounds  for the Poisson kernel and for the Dunkl translations of radial compactly supported continuous functions.

\begin{theorem}\label{teo:heat_new}   For every nonnegative integer $m$ and every multi-indices $\alpha,\beta$ there are constants $C_{m,\alpha,\beta}, c>0$ such that
  \begin{equation}\label{eq:heat2} |\partial_t^m \partial_{\mathbf x}^{\alpha}\partial_{\mathbf y}^{\beta} h_t(\mathbf{x},\mathbf{y})|
  \leq C_{m,\alpha,\beta} t^{-m-\frac{|\alpha|}{2}-\frac{|\beta|}{2}} \Big(1+\frac{\| \mathbf x-\mathbf y\|}{\sqrt{t}}\Big)^{-2} \mathcal G_{t\slash c} (\mathbf x,\mathbf y).
  \end{equation}
  Moreover, if $\|\mathbf y-\mathbf y'\|\leq \sqrt{t}$, then
  \begin{equation}\label{eq:heat3}
  \begin{split}|\partial_t^m  h_t(\mathbf{x},\mathbf{y}) & -
 \partial_t^m  h_t(\mathbf{x},\mathbf{y'})|  \leq C_{m} t^{-m} \frac{\|\mathbf y-\mathbf y'\|}{\sqrt{t}}\Big(1+\frac{\| \mathbf x-\mathbf y\|}{\sqrt{t}}\Big)^{-2} \mathcal G_{t\slash c} (\mathbf x,\mathbf y).
  \end{split}\end{equation}
\end{theorem}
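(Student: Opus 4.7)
The idea is to derive the improvement factor $(1+\|\mathbf{x}-\mathbf{y}\|/\sqrt{t})^{-2}$ from an exact algebraic identity for $\|\mathbf{x}-\mathbf{y}\|^2 h_t$, obtained by computing $\Delta_{\mathbf{x}} h_t$ through iterated application of Dunkl operators and invoking the heat equation. From the explicit formula \eqref{eq:heat_kernel}, together with \eqref{eq:Leibniz} (applicable because $e^{-\|\mathbf{x}\|^2/(4t)}$ is $G$-invariant in $\mathbf{x}$) and \eqref{eq:T_j_x}, one finds
\[
T_{j,\mathbf{x}} h_t(\mathbf{x},\mathbf{y}) = \frac{y_j - x_j}{2t}\, h_t(\mathbf{x},\mathbf{y}).
\]
Applying $T_{j,\mathbf{x}}$ once more requires the general Dunkl product rule for $\frac{y_j-x_j}{2t}\cdot h_t$, whose first factor is \emph{not} $G$-invariant; using $\sigma_\alpha\mathbf{x}=\mathbf{x}-\langle\alpha,\mathbf{x}\rangle\alpha$, a direct computation produces an additional reflection term $-\frac{1}{4t}\sum_{\alpha\in R} k(\alpha)\alpha_j^2\, h_t(\sigma_\alpha\mathbf{x},\mathbf{y})$. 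Summing over $j=1,\dots,N$ (so that $\sum_j\alpha_j^2=\|\alpha\|^2=2$), invoking $\Delta_{\mathbf{x}} h_t=\partial_t h_t$, and rearranging yields the \emph{key identity}
\[
\|\mathbf{x}-\mathbf{y}\|^2\, h_t(\mathbf{x},\mathbf{y}) = 4t^2\, \partial_t h_t(\mathbf{x},\mathbf{y}) + 2Nt\, h_t(\mathbf{x},\mathbf{y}) + 2t \sum_{\alpha\in R} k(\alpha)\, h_t(\sigma_\alpha\mathbf{x},\mathbf{y}).
\]

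The base case of \eqref{eq:heat2} (with $m=|\alpha|=|\beta|=0$) is then immediate: bound the first summand via \eqref{eq:DtDxDyHeat} with $m=1$, the second via \eqref{heat_radial}, and each term in the reflection sum via $|h_t(\sigma_\alpha\mathbf{x},\mathbf{y})|\le C\mathcal{G}_{t/c}(\sigma_\alpha\mathbf{x},\mathbf{y})=C\mathcal{G}_{t/c}(\mathbf{x},\mathbf{y})$ ($G$-invariance of $\mathcal{G}_t$); every piece is at most $Ct\mathcal{G}_{t/c}(\mathbf{x},\mathbf{y})$, so $\|\mathbf{x}-\mathbf{y}\|^2 h_t\le Ct\mathcal{G}_{t/c}$, and combining with the already-known $h_t\le C\mathcal{G}_{t/c}$ yields $h_t\le C(1+\|\mathbf{x}-\mathbf{y}\|/\sqrt{t})^{-2}\mathcal{G}_{t/c}$. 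For general $(m,\alpha,\beta)$, apply \eqref{eq:DtDxDyHeat2t} to dominate $\partial_t^m\partial_{\mathbf{x}}^\alpha\partial_{\mathbf{y}}^\beta h_t$ by $t^{-m-|\alpha|/2-|\beta|/2} h_{2t}$ and substitute the just-proved base estimate for $h_{2t}$, absorbing the passages $\sqrt{2t}\to\sqrt{t}$ and $\mathcal{G}_{2t/c}\to\mathcal{G}_{t/c'}$ into the constant.

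The Hölder estimate \eqref{eq:heat3} is obtained by the standard fundamental-theorem-of-calculus argument: write $\partial_t^m h_t(\mathbf{x},\mathbf{y})-\partial_t^m h_t(\mathbf{x},\mathbf{y}')$ as $\int_0^1\langle\nabla_{\mathbf{y}}\partial_t^m h_t(\mathbf{x},\mathbf{y}_s),\mathbf{y}-\mathbf{y}'\rangle\,ds$ with $\mathbf{y}_s=\mathbf{y}'+s(\mathbf{y}-\mathbf{y}')$, bound the integrand via \eqref{eq:heat2} with $|\beta|=1$, and observe that since $\mathbf{y}_s$ stays in a ball of radius $\le\sqrt{t}$ around $\mathbf{y}$, the factor $(1+\|\mathbf{x}-\mathbf{y}_s\|/\sqrt{t})^{-2}\mathcal{G}_{t/c}(\mathbf{x},\mathbf{y}_s)$ is comparable, after enlarging $c$, to the same expression at $\mathbf{y}$ (by doubling of $w$ and the triangle inequality $d(\mathbf{x},\mathbf{y})\le d(\mathbf{x},\mathbf{y}_s)+\sqrt{t}$ applied to the exponential of $\mathcal{G}_{t/c}$). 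The genuinely delicate step throughout is the bookkeeping of reflection contributions in the second iteration of $T_{j,\mathbf{x}}$ that produces the key identity; once that identity is in hand, everything else is routine.
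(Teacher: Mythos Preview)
Your proof is correct and follows essentially the same strategy as the paper: derive the identity $\|\mathbf{x}-\mathbf{y}\|^2 h_t = 4t^2\partial_t h_t + 2Nt\,h_t + 2t\sum_{\alpha\in R} k(\alpha)h_t(\sigma_\alpha\mathbf{x},\mathbf{y})$ by computing $T_{j,\mathbf{x}}^2 h_t$ and invoking the heat equation, use it together with \eqref{eq:DtDxDyHeat2t}/\eqref{heat_radial} and the $G$-invariance of $\mathcal{G}_t$ to obtain the base case \eqref{eq:heat_better}, then lift to general $(m,\alpha,\beta)$ via \eqref{eq:DtDxDyHeat2t}. Your fundamental-theorem-of-calculus argument for \eqref{eq:heat3} is exactly the standard one the paper alludes to when it says to repeat the proof of \eqref{eq:heat_Holder} and insert \eqref{eq:heat_better} at the end.
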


\begin{remark}
  \normalfont Observe that the estimates \eqref{eq:DtDxDyHeat} and \eqref{eq:heat2} differ by the factor $(1+\| \mathbf x-\mathbf y\|\slash \sqrt{t})^{-2}$. We want to emphasize that the presence of the factor is crucial in the proof of  the atomic decomposition stated in Theorem~\ref{teo:main1}.
\end{remark}

\begin{lemma} For all $\mathbf x,\mathbf y\in\mathbb R^N$ and for any $t>0$ we have
 \begin{equation}\label{eq:T_j}
 T_{j,\mathbf x}h_t(\mathbf{x},\mathbf{y})=\frac{y_j-x_j}{2t}h_t(\mathbf{x},\mathbf{y}),
 \end{equation}
  \begin{align}\label{eq:T_jx2}
  T_{j,\mathbf x}^2 h_t(\mathbf{x},\mathbf{y})=\frac{(y_j-x_j)^2}{(2t)^2}h_t(\mathbf{x},\mathbf{y})-\frac{1}{2t}h_t(\mathbf{x},\mathbf{y})-\frac{1}{2t}\sum_{\alpha \in R}\frac{k(\alpha)}{2}\alpha_j^2h_t(\sigma_\alpha(\mathbf{x}),\mathbf{y}).
 \end{align}
\end{lemma}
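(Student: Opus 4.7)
The plan is to derive \eqref{eq:T_j} from the explicit formula \eqref{eq:heat_kernel} combined with the eigenfunction property \eqref{eq:T_j_x}, and then obtain \eqref{eq:T_jx2} by applying $T_{j,\mathbf x}$ to \eqref{eq:T_j}.

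For \eqref{eq:T_j}, I would factor
\[
h_t(\mathbf x,\mathbf y) \;=\; G_t(\mathbf x,\mathbf y)\,E\!\left(\tfrac{\mathbf x}{\sqrt{2t}},\tfrac{\mathbf y}{\sqrt{2t}}\right), \qquad G_t(\mathbf x,\mathbf y) := c_k^{-1}(2t)^{-\mathbf N/2}e^{-(\|\mathbf x\|^2+\|\mathbf y\|^2)/(4t)}.
\]
Since $G_t$ depends on $\mathbf x$ only through $\|\mathbf x\|^2$, it is $G$-invariant in $\mathbf x$, so the difference term in $T_{j,\mathbf x}$ vanishes on $G_t$ and $T_{j,\mathbf x}G_t = \partial_{x_j}G_t = -\frac{x_j}{2t}G_t$. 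For the Dunkl-kernel factor I would use the scaling identity $T_{j,\mathbf x}[f(\mathbf x/\lambda)] = \lambda^{-1}(T_j f)(\mathbf x/\lambda)$ (which is immediate from the definition of $T_j$, using $\partial_{x_j}$ chain rule and $\langle\alpha,\mathbf x\rangle = \lambda\langle\alpha,\mathbf x/\lambda\rangle$), applied with $f(\mathbf u) = E(\mathbf u,\mathbf y/\sqrt{2t})$ and $\lambda = \sqrt{2t}$; by \eqref{eq:T_j_x} this gives $T_{j,\mathbf x}E(\mathbf x/\sqrt{2t},\mathbf y/\sqrt{2t}) = \frac{y_j}{2t}E(\mathbf x/\sqrt{2t},\mathbf y/\sqrt{2t})$. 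Because $G_t$ is $G$-invariant in $\mathbf x$, the Leibniz identity \eqref{eq:Leibniz} applies and the two contributions combine to $\frac{y_j - x_j}{2t}h_t(\mathbf x,\mathbf y)$.

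For \eqref{eq:T_jx2}, I would apply $T_{j,\mathbf x}$ to the right-hand side of \eqref{eq:T_j}, i.e.\ to $\phi\cdot h_t$ with $\phi(\mathbf x):=(y_j-x_j)/(2t)$. Here $\phi$ is \emph{not} $G$-invariant in $\mathbf x$ and $h_t(\cdot,\mathbf y)$ is not either, so \eqref{eq:Leibniz} cannot be invoked directly; instead I would expand from the definition of $T_j$. The $\partial_{x_j}$ part contributes $-\frac{1}{2t}h_t + \phi\,\partial_{x_j}h_t$, and for each $\alpha\in R$ I would split the difference-quotient numerator as
\[
\phi(\mathbf x)\bigl[h_t(\mathbf x,\mathbf y)-h_t(\sigma_\alpha\mathbf x,\mathbf y)\bigr] \;+\; \bigl[\phi(\mathbf x)-\phi(\sigma_\alpha\mathbf x)\bigr]\,h_t(\sigma_\alpha\mathbf x,\mathbf y).
\]
The $\phi(\mathbf x)$-piece reassembles with $\phi\,\partial_{x_j}h_t$ into $\phi\,T_{j,\mathbf x}h_t = \phi^2 h_t = \frac{(y_j-x_j)^2}{(2t)^2}h_t$ by \eqref{eq:T_j}. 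For the remaining piece, the identity $(\sigma_\alpha\mathbf x)_j = x_j - \alpha_j\langle\alpha,\mathbf x\rangle$ (immediate from \eqref{eq:reflection} with $\|\alpha\|^2=2$) gives $\phi(\mathbf x)-\phi(\sigma_\alpha\mathbf x) = -\alpha_j\langle\alpha,\mathbf x\rangle/(2t)$, so the reflection factor $\langle\alpha,\mathbf x\rangle$ cancels cleanly against the denominator and the $\alpha$-sum produces $-\frac{1}{2t}\sum_{\alpha\in R}\frac{k(\alpha)}{2}\alpha_j^2 h_t(\sigma_\alpha\mathbf x,\mathbf y)$. Gathering the three contributions yields \eqref{eq:T_jx2}.

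The computation is elementary throughout; there is no genuine obstacle. The only step that requires a little care is the ``by hand'' product rule in the second identity, where the non-invariance of both $\phi$ and $h_t(\cdot,\mathbf y)$ forces one back to the definition of $T_j$ rather than using \eqref{eq:Leibniz}, and where the key simplification is the clean cancellation of $\langle\alpha,\mathbf x\rangle$ in the difference of $\phi$.
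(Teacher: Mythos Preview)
Your proposal is correct and follows essentially the same route as the paper: for \eqref{eq:T_j} the paper also factors out the $G$-invariant Gaussian and applies the Leibniz rule \eqref{eq:Leibniz} together with \eqref{eq:T_j_x}, and for \eqref{eq:T_jx2} it likewise expands $T_{j,\mathbf x}$ from the definition and uses the same add-and-subtract in the difference quotient together with $x_j-(\sigma_\alpha\mathbf x)_j=\alpha_j\langle\alpha,\mathbf x\rangle$. The only cosmetic difference is that the paper first peels off the constant $y_j/(2t)$ before handling $T_{j,\mathbf x}\bigl(\tfrac{x_j}{2t}h_t\bigr)$, whereas you treat $\phi=(y_j-x_j)/(2t)$ as a single factor; both lead to the identical regrouping.
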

\begin{proof}
 The function $\mathbf{x} \mapsto \exp\left(-\frac{\|\mathbf{x}\|^2+\|\mathbf{y}\|^2}{4t}\right)$ is $G$-invariant so,  by \eqref{eq:Leibniz},
 \begin{align*}
  c_k (2t)^{\mathbf{N}/2}T_{j,\mathbf x}h_t(\mathbf{x},\mathbf{y})=&\partial_{x_j}
  \left(\exp\left(-\frac{\|\mathbf{x}\|^{2}+\|\mathbf{y}\|^2}{4t}\right)\right)E\left(\mathbf{x},\frac{\mathbf{y}}{2t}\right)\\
  &+T_{j,\mathbf x} \left(E\left(\mathbf{x},\frac{\mathbf{y}}{2t}\right)\right)\exp\left(-\frac{\|\mathbf{x}\|^2+\|\mathbf{y}\|^2}{4t}\right)\\=&c_k (2t)^{\mathbf{N}/2}\left(\frac{-x_j}{2t}h_t(\mathbf{x},\mathbf{y})+\frac{y_j}{2t}h_t(\mathbf{x},\mathbf{y})\right),
 \end{align*}
where in the last equality we have used \eqref{eq:T_j_x}. Thus \eqref{eq:T_j} is established.

To prove \eqref{eq:T_jx2} we utilize  \eqref{eq:T_j} and get
 \begin{align*}
  T_{j,\mathbf x}^2 h_t(\mathbf{x},\mathbf{y})&=T_{j,\mathbf x} \left(\frac{y_j-x_j}{2t}h_t(\mathbf{x},\mathbf{y})\right)=\frac{y_j}{2t}T_{j,\mathbf x}h_t(\mathbf{x},\mathbf{y})-T_{j,\mathbf x}\left(\frac{x_j}{2t}h_t(\mathbf{x},\mathbf{y})\right)\\&=\frac{y_j}{2t}\frac{(y_j-x_j)}{2t}h_t(\mathbf{x},\mathbf{y})-S_j(\mathbf{x},\mathbf{y},t).
 \end{align*}
 Let  $(\sigma_\alpha(\mathbf{x}))_{j}$ denote the $j$-th coordinate of $\sigma_\alpha(\mathbf{x})$. Further,
 \begin{align*}
 &S_j(\mathbf{x},\mathbf{y},t)=\partial_{x_j}\left(\frac{x_j}{2t}h_t(\mathbf{x},\mathbf{y})\right)\\&+\sum_{\alpha \in R}\frac{k(\alpha)}{2}\alpha_j\frac{\frac{x_j}{2t}h_t(\mathbf{x},\mathbf{y})-\frac{(\sigma_\alpha(\mathbf{x}))_{j}}{2t}h_t(\sigma_{\alpha}(\mathbf{x}),\mathbf{y})}{\langle \alpha,\mathbf{x}\rangle}\\=&\frac{1}{2t}h_t(\mathbf{x},\mathbf{y})+\frac{x_j}{2t}\partial_{x_j}h_t(\mathbf{x},\mathbf{y})+\sum_{\alpha \in R}\frac{k(\alpha)}{2}\alpha_j\frac{\frac{x_j}{2t}h_t(\mathbf{x},\mathbf{y})-\frac{x_j}{2t}h_t(\sigma_\alpha(\mathbf{x}),\mathbf{y})}{\langle \alpha,\mathbf{x}\rangle}\\&+\sum_{\alpha \in R}\frac{k(\alpha)}{2}\alpha_j\frac{\frac{x_j}{2t}h_t(\sigma_\alpha(\mathbf{x}),\mathbf{y})-\frac{(\sigma_\alpha(\mathbf{x}))_{j}}{2t}h_t(\sigma_{\alpha}(\mathbf{x}),\mathbf{y})}{\langle \alpha,\mathbf{x}\rangle}
  \\=&\frac{1}{2t}h_t(\mathbf{x},\mathbf{y})+\frac{x_j}{2t}T_{j,\mathbf x}h_t(\mathbf{x},\mathbf{y})\\&+
  \sum_{\alpha \in R}\frac{k(\alpha)}{2}\alpha_j\frac{\frac{x_j}{2t}h_t(\sigma_{\alpha}(\mathbf{x}),\mathbf{y})-\frac{(\sigma_\alpha(\mathbf{x}))_{j}}{2t}h_t(\sigma_{\alpha}(\mathbf{x}),\mathbf{y})}{\langle \alpha,\mathbf{x}\rangle}.
\end{align*} Note that $x_j-(\sigma_\alpha(\mathbf{x}))_{j}=\langle \mathbf{x},\alpha \rangle \alpha_j$. Therefore
\begin{align*}
&S_j(\mathbf{x},\mathbf{y},t)=\frac{1}{2t}h_t(\mathbf{x},\mathbf{y})+\frac{x_j}{2t}\frac{(y_j-x_j)}{2t}h_t(\mathbf{x},\mathbf{y})+\frac{1}{2t}\sum_{\alpha \in R}\frac{k(\alpha)}{2}\alpha_j^2h_t(\sigma_{\alpha}(\mathbf{x}),\mathbf{y}).
\end{align*}
Finally,
\begin{align*}
 T_{j,\mathbf x}^2h_t(\mathbf{x},\mathbf{y})=&\frac{y_j}{2t}\frac{(y_j-x_j)}{2t}h_t(\mathbf{x},\mathbf{y})-\frac{1}{2t}h_t(\mathbf{x},\mathbf{y})\\&-\frac{x_j}{2t}\frac{(y_j-x_j)}{2t}h_t(\mathbf{x},\mathbf{y})-\frac{1}{2t}\sum_{\alpha \in R}\frac{k(\alpha)}{2}\alpha_j^2h_t(\sigma_{\alpha}(\mathbf{x}),\mathbf{y}).
\end{align*}
\end{proof}

\begin{proof}[Proof of Theorem~\ref{teo:heat_new}] Clearly,  $\Delta_{\mathbf x}  h_t(\mathbf{x},\mathbf{y})=\partial_t h_t(\mathbf{x},\mathbf{y})$. Hence,  summing  up \eqref{eq:T_jx2} over $j=1,2,\ldots,N$, we obtain
 \begin{align}\label{eq:heat}
  \partial_t h_t(\mathbf{x},\mathbf{y})
  =\frac{\|\mathbf{x}-\mathbf{y}\|^{2}}{(2t)^2}h_t(\mathbf{x},\mathbf{y})
  -\frac{N}{2t}h_t(\mathbf{x},\mathbf{y})-\frac{1}{2t}\sum_{\alpha \in R}k(\alpha)h_t(\sigma_{\alpha}(\mathbf{x}),\mathbf{y}).
 \end{align}
 Applying \eqref{eq:heat} together with~\eqref{eq:DtDxDyHeat2t} we get
\begin{equation}\label{eq:heat_better2t}
\Big(1+\frac{\| \mathbf x-\mathbf y\|}{\sqrt{t}} \Big)^2h_t(\mathbf x,\mathbf y)\lesssim  h_{2t}(\mathbf x,\mathbf y)+\sum_{\alpha \in R}k(\alpha)h_t(\sigma_{\alpha}(\mathbf{x}),\mathbf{y}).
\end{equation}
Using  \eqref{eq:heat_better2t} and \eqref{heat_radial} we obtain
 \begin{equation}\label{eq:heat_better}
 h_t(\mathbf x,\mathbf y)\lesssim \Big(1+\frac{\| \mathbf x-\mathbf y\|}{\sqrt{t}} \Big)^{-2} \mathcal G_{t\slash c}(\mathbf x,\mathbf y),
 \end{equation}
which
 completes the proof of~\eqref{eq:heat2} for $m=0$, $\alpha=\beta=0$. Now \eqref{eq:heat2} in its general form   is a direct consequence of \eqref{eq:DtDxDyHeat2t} and \eqref{eq:heat_better}.

The inequality \eqref{eq:heat3} can be proved in a similar way. To this end we repeat arguments from the proof of~\eqref{eq:heat_Holder} presented in ~\cite[Theorem 4.3 (b)]{conjugate} and  apply (at the very end) \eqref{eq:heat_better} to get the additional factor $(1+\|\mathbf x-\mathbf y\|\slash \sqrt{t})^{-2}$. We omit the details.
\end{proof}

 \begin{remark} \normalfont Let us note that iteration of the procedure presented in the proof of Theorem~\ref{teo:heat_new} may lead to an improvement of \eqref{eq:heat2}. Indeed, if we use \eqref{eq:heat_better2t} twice, then
\begin{align*}
&h_t(\mathbf x,\mathbf y)\lesssim
{ \Big(1+\frac{\| \mathbf x-\mathbf y\|}{\sqrt{t}}\Big)^{-4}
\Big(h_{4t}(\mathbf x,\mathbf y)
+\sum_{\alpha\in R} h_{2t}(\sigma_\alpha(\mathbf x),\mathbf y)\Big) }\\
&+{\Big(1+\frac{\| \mathbf x-\mathbf y\|}{\sqrt{t}}\Big)^{-2}}
{ \sum_{\alpha\in R} k(\alpha)\Big(1+\frac{\| \sigma_\alpha(\mathbf x)- \mathbf y\|}{\sqrt{t}}\Big)^{-2} h_{2t}(\sigma_\alpha(\mathbf x),\mathbf y)}\\
&+{\Big(1+\frac{\| \mathbf x-\mathbf y\|}{\sqrt{t}}\Big)^{-2}}
{ \sum_{\alpha\in R} k(\alpha)\Big(1+\frac{\| \sigma_\alpha(\mathbf x)- \mathbf y\|}{\sqrt{t}}\Big)^{-2} \sum_{\beta\in R} h_t(\sigma_\beta(\sigma_\alpha(\mathbf x),\mathbf y)}\\
\lesssim & \Big(1+\frac{\| \mathbf x-\mathbf y\|}{\sqrt{t}}\Big)^{-4}\mathcal G_{t\slash c}(\mathbf x,\mathbf y)\\&+\Big\{\Big(1+\frac{\| \mathbf x-\mathbf y\|}{\sqrt{t}}\Big)^{-2} \sum_{\alpha\in R} k(\alpha)\Big(1+\frac{\| \sigma_\alpha(\mathbf x)- \mathbf y\|}{\sqrt{t}}\Big)^{-2}\Big\}\mathcal G_{t\slash c}(\mathbf x,\mathbf y).
\end{align*}
In particular for the root system  $A_2$, if $\mathbf x_1,..., \mathbf x_5$, and   $\mathbf y$ are located as in Figure~\ref{Fig1}, then
\begin{equation*}
\begin{split}
h_t(\mathbf x_j,\mathbf y)& \lesssim w(B(\mathbf y,\sqrt{t}))^{-1}\Big(1+\frac{\| \mathbf x_j-\mathbf y\|}{\sqrt{t}}\Big)^{-2} \ \ \text{\rm for} \ j=1,2,3;\\
h_t(\mathbf x_j,\mathbf y)& \lesssim w(B(\mathbf y,\sqrt{t}))^{-1}\Big(1+\frac{\| \mathbf x_j-\mathbf y\|}{\sqrt{t}}\Big)^{-4} \ \ \text{\rm for} \ j=4,5.\\
\end{split}
\end{equation*}

\begin{figure}[H]
\centering
\begin{tikzpicture}[scale=2.5]
\def\radius{0.7cm}
\def\offset{25}
\def\radiuss{0.015cm}

\foreach \angle in {0,...,2}
{
\draw[-{Latex[lightgray, length=7mm, width=5mm]},line width=0.3mm, lightgray] (0,0)--({cos(360/6*\angle)},{sin(360/6*\angle)});
};

\foreach \angle in {3,4,5}
{
\draw[-{Latex[lightgray, length=7mm, width=5mm,fill=white]},dotted,line width=0.3mm, lightgray] (0,0)--({cos(360/6*\angle)},{sin(360/6*\angle)});
};

\foreach \angle in {0,...,2}
{
\draw[fill=mygray, line width=0.3mm] ({\radius*cos(360/3*\angle+30+\offset)},{\radius*sin(360/3*\angle+30+\offset)}) circle (\radiuss);
\draw[fill=mygray, line width=0.3mm] ({\radius*cos(360/3*\angle+30-\offset)},{\radius*sin(360/3*\angle+30-\offset)}) circle (\radiuss);
};

\draw[fill=mygray, line width=0.3mm] ({1.15*\radius*cos(30+\offset)},{1.15*\radius*sin(30+\offset)}) node {$\mathbf{y}$};

\draw[fill=mygray, line width=0.3mm] ({1.15*\radius*cos(360/6*1+30+\offset+8)},{1.15*\radius*sin(360/6*1+30+\offset+8)}) node {$\mathbf{x}_1$};

\draw[fill=mygray, line width=0.3mm] ({1.15*\radius*cos(-360/6*1+30+\offset+10)},{1.15*\radius*sin(-360/6*1+30+\offset+10)}) node {$\mathbf{x}_2$};

\draw[fill=mygray, line width=0.3mm] ({1.15*\radius*cos(360/6*2+30+\offset)},{1.15*\radius*sin(360/6*2+30+\offset)}) node {$\mathbf{x}_5$};

\draw[fill=mygray, line width=0.3mm] ({1.15*\radius*cos(-360/6*2+30+\offset)},{1.15*\radius*sin(-360/6*2+30+\offset)}) node {$\mathbf{x}_4$};

\draw[fill=mygray, line width=0.3mm] ({1.15*\radius*cos(360/6*3+30+\offset+10)},{1.15*\radius*sin(360/6*3+30+\offset+10)}) node {$\mathbf{x}_3$};

\foreach \angle in {1,...,6}
{
\draw[line width=0.4mm]
(0,0)--({cos(360/6*\angle-30)},{sin(360/6*\angle-30)});
};
\end{tikzpicture}

\caption{The points $\mathbf{y}, \mathbf{x}_1,\ldots,\mathbf{x}_5$ and the root system $A_2$.}\label{Fig1}
\end{figure}
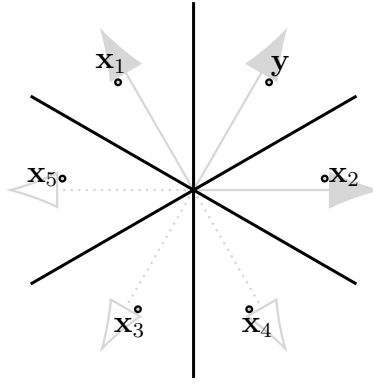

Let us also elaborate the product case  $A_1^N$, where $R=\{\pm\sqrt{2}e_j: j=1,...,N\}$ and  $e_j$ is the canonical  orthonormal basis in $\mathbb R^N$. If $\mathbf y=(1,1,...,1)$ and $\mathbf x=(\varepsilon_1,\varepsilon_2,...,\varepsilon_N)$, $\varepsilon_j\in\{-1,1\}$, then iteration of \eqref{eq:heat_better2t} leads to
\begin{equation}\label{product_h_t}
h_t(\mathbf x,\mathbf y)\lesssim w(B(\mathbf y,\sqrt{t}))^{-1} \Big(1+\frac{\| \mathbf x-\mathbf y\|}{\sqrt{t}}\Big)^{-4\ell},
\end{equation}
where $\ell=\#\{j: \varepsilon_j=-1\}$, which is exactly the smallest number of reflections $\sigma_{\sqrt{2}e_j}$ that are needed to pass from $\mathbf y$ to $\mathbf x$. If $k>0$, then~\eqref{product_h_t} is sharp, since the heat kernel $h_t(\mathbf x,\mathbf y)$ is the product of one dimensional heat kernels, whose behavior, in this case, is well understood (see e.g. \cite[Proposition 2.3]{ABDH}).
\end{remark}

\begin{corollary}\label{coro:corollary}Assume that $\Phi(\mathbf x)$ is a radial continuous function which is supported in $B(0,1)$. Let $\Phi_t(\mathbf x)=t^{-\mathbf N}\Phi(\mathbf x\slash t)$. There is a constant $C=C(\Phi)>0$ such that
   \begin{equation}\label{eq:compact}|\Phi_t(\mathbf x,\mathbf y)|\leq  CV(\mathbf x,\mathbf y, t)^{-1} \Big(1+\frac{\| \mathbf x-\mathbf y\|}{t}\Big)^{-2}\chi_{[0,1]}(d(\mathbf x,\mathbf y)\slash t).
   \end{equation}
\end{corollary}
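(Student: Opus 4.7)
The plan is to reduce the bound for $\Phi_t(\mathbf{x},\mathbf{y})=\tau_{\mathbf{x}}\Phi_t(-\mathbf{y})$ to the improved heat kernel estimate~\eqref{eq:heat2} of Theorem~\ref{teo:heat_new}. By the R\"osler integral representation~\eqref{eq:translation-radial},
$$\Phi_t(\mathbf{x},\mathbf{y})=\int_{\mathbb{R}^N}\tilde{\Phi}_t\bigl(A(\mathbf{x},\mathbf{y},\eta)\bigr)\,d\mu_{\mathbf{x}}(\eta),$$
where $\tilde{\Phi}_t(r)=t^{-\mathbf{N}}\tilde{\Phi}(r/t)$. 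Since $\supp\Phi\subset B(0,1)$, the integrand is supported where $A(\mathbf{x},\mathbf{y},\eta)\leq t$, and \eqref{eq:A(x,y,eta)>d(x,y)} gives $A(\mathbf{x},\mathbf{y},\eta)\geq d(\mathbf{x},\mathbf{y})$. Consequently $\Phi_t(\mathbf{x},\mathbf{y})=0$ whenever $d(\mathbf{x},\mathbf{y})>t$, which produces the support factor $\chi_{[0,1]}(d(\mathbf{x},\mathbf{y})/t)$ for free.

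Next, I would dominate $|\Phi_t(\mathbf{x},\mathbf{y})|$ pointwise by the Dunkl heat kernel $h_{t^2}(\mathbf{x},\mathbf{y})$. Using the same integral representation for the radial function $h_{t^2}$,
$$h_{t^2}(\mathbf{x},\mathbf{y})=c_k^{-1}(2t^2)^{-\mathbf{N}/2}\int e^{-A(\mathbf{x},\mathbf{y},\eta)^2/(4t^2)}\,d\mu_{\mathbf{x}}(\eta)\geq c_k^{-1}(2t^2)^{-\mathbf{N}/2}e^{-1/4}\,\mu_{\mathbf{x}}\{A\leq t\},$$
since on the set $\{A\leq t\}$ the Gaussian factor is bounded below by $e^{-1/4}$. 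On the other hand, the crude bound $|\Phi_t(\mathbf{x},\mathbf{y})|\leq \|\tilde{\Phi}\|_\infty\,t^{-\mathbf{N}}\,\mu_{\mathbf{x}}\{A\leq t\}$ together with the previous inequality yields $|\Phi_t(\mathbf{x},\mathbf{y})|\leq C_\Phi\,h_{t^2}(\mathbf{x},\mathbf{y})$ with $C_\Phi$ depending only on $\|\tilde{\Phi}\|_\infty$.

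Finally, I would apply~\eqref{eq:heat2} with $m=0$, $\alpha=\beta=0$, and time $t^2$ to obtain
$$h_{t^2}(\mathbf{x},\mathbf{y})\leq C\Big(1+\tfrac{\|\mathbf{x}-\mathbf{y}\|}{t}\Big)^{-2}\mathcal{G}_{t^2/c}(\mathbf{x},\mathbf{y})\lesssim V(\mathbf{x},\mathbf{y},t)^{-1}\Big(1+\tfrac{\|\mathbf{x}-\mathbf{y}\|}{t}\Big)^{-2},$$
where in the last inequality I use the trivial bound on the Gaussian piece of $\mathcal{G}_{t^2/c}$ and the doubling property~\eqref{eq:doubling} to absorb the constant $\sqrt{c}$ when passing from $V(\mathbf{x},\mathbf{y},t/\sqrt{c})^{-1}$ to $V(\mathbf{x},\mathbf{y},t)^{-1}$. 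Multiplying this with the support factor from the first step gives exactly~\eqref{eq:compact}.

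The whole argument is essentially routine once Theorem~\ref{teo:heat_new} is available; the only substantive observation is that the compact support of $\Phi$ restricts R\"osler's integral to a region on which the heat kernel's Gaussian factor is bounded below by a positive constant, so the improved heat kernel estimate transfers verbatim to $\Phi_t(\mathbf{x},\mathbf{y})$. No obstacle of real difficulty is expected here.
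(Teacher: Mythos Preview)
Your proposal is correct and follows essentially the same route as the paper: dominate $|\Phi_t(\mathbf{x},\mathbf{y})|$ by $h_{t^2}(\mathbf{x},\mathbf{y})$ via R\"osler's formula~\eqref{eq:translation-radial}, then invoke the improved heat bound~\eqref{eq:heat2} and the support property~\eqref{eq:A(x,y,eta)>d(x,y)}. The only cosmetic difference is that the paper obtains $|\Phi_t(\mathbf{x},\mathbf{y})|\leq C\,h_{t^2}(\mathbf{x},\mathbf{y})$ more directly, by first noting the pointwise radial inequality $|\tilde{\Phi}_t(r)|\leq C\,\tilde{h}_{t^2}(r)$ and then integrating against $d\mu_{\mathbf{x}}$, rather than going through the intermediate quantity $\mu_{\mathbf{x}}\{A\leq t\}$ as you do.
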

\begin{proof}
There is a constant $C>0$ such that
$$|\Phi_t(\mathbf x)|\leq C h_{t^2}(\mathbf{x}),$$
where $h_t(\mathbf x)=c_k^{-1}\,(2t)^{-\mathbf{N}/2}\,e^{-\frac{{\|}\mathbf{x}{\|}^2}{4t}}$.
Applying \eqref{eq:translation-radial} we obtain
$$ |\Phi_t(\mathbf x,\mathbf y)|\leq Ch_{t^2}(\mathbf x,\mathbf y)\leq C'V(\mathbf x,\mathbf y, t)^{-1}\Big(1+\frac{\| \mathbf x-\mathbf y\|}{t}\Big)^{-2}.$$
By~\eqref{eq:translation-radial} and~\eqref{eq:A(x,y,eta)>d(x,y)} we have $\Phi_t(\mathbf x,\mathbf y)=0$ if $d(\mathbf x,\mathbf  y)>t$, so the  proof of \eqref{eq:compact} is complete.
\end{proof}

\noindent
{\bf Estimates for the Poisson kernel.} Let $p_t(\mathbf x,\mathbf y)$ denote the integral kernel of the operator $P_t=e^{-t\sqrt{-\Delta}}$.
It is related with the heat semigroup by the subordination formula
\begin{equation}\label{eq:subordination}
p_t(\mathbf x,\mathbf y)= \pi^{-1/2}\int_0^{\infty} e^{-u}h_{t^2\slash (4u)}(\mathbf x,\mathbf y) \frac{du}{\sqrt{u}}.
\end{equation}
The kernel $p_t(\mathbf x,\mathbf y)$ was introduced and
studied in~\cite{Roesler-Voit2}. It was called the $k$-{\it Cauchy kernel} there. For a continuous bounded function $f$ defined
on $\mathbb R^N$, the function $v(t,\mathbf x)=P_tf(\mathbf x)$, $v(0,\mathbf x)=f(\mathbf x)$,  solves the Cauchy problem $(\partial_t^2 +\Delta)u=0$, $v$ is continuous and bounded on $[0,\infty)\times \mathbb R^N$ (see~\cite[Theorem~5.6]{Roesler-Voit2}). }

Proposition 5.4 of~\cite{conjugate} asserts that there is a constant $C>0$ such that
\begin{equation}\label{eq:Poisson_up}
{p}_t(\mathbf{x},\mathbf{y})\leq\frac{C}{V(\mathbf{x},\mathbf{y},t+d(\mathbf{x},\mathbf{y}))}\,\frac{t}{t+d(\mathbf{x},\mathbf{y})}
\end{equation}
for every \,$t>0$ and for every \,$\mathbf{x},\mathbf{y}\in\mathbb{R}^N$.
Moreover, for any nonnegative integer \,$m$ and for any multi-index \,$\beta$, there is a constant \,$C\hspace{-.5mm}\ge\hspace{-.5mm}0$ such that, for every \,$t>0$ and for every \,$\mathbf{x},\mathbf{y}\in\mathbb{R}^N$,
\begin{equation}\label{DtDyPoisson}
\bigl|\hspace{.25mm}\partial_t^m\partial_{\mathbf{y}}^{\beta}\hspace{.25mm}p_t(\mathbf{x},\mathbf{y})\bigr|\le C\,p_t(\mathbf{x},\mathbf{y})\hspace{.25mm}\bigl(\hspace{.25mm}t\hspace{-.25mm}+d(\mathbf{x},\mathbf{y})\bigr)^{\hspace{-.5mm}-m-|\beta|}\times\begin{cases}
\,1&\text{if \,}m\hspace{-.25mm}=\hspace{-.25mm}0\hspace{.25mm},\\
\,1+\frac{d(\mathbf{x},\mathbf{y})}t&\text{if \,}m\hspace{-.5mm}>\hspace{-.5mm}0\hspace{.25mm}.\\
\end{cases}\end{equation}
The following proposition improves \eqref{eq:Poisson_up}.
\begin{proposition}\label{propo:Poisson}
If $N\geq 2$, then
\begin{equation}
\label{eq:Poisson_new}
 p_t(\mathbf x,\mathbf y)\lesssim \frac{t}{V(\mathbf x,\mathbf y, d(\mathbf x,\mathbf y)+t)}\cdot \frac{d(\mathbf x,\mathbf y)+t}{\| \mathbf x-\mathbf y\|^2+t^2}.
\end{equation}
If $N=1$, then
\begin{equation}
\label{eq:Poisson_dim1_new}
 p_t(\mathbf x,\mathbf y)\lesssim \frac{t}{V(\mathbf x,\mathbf y, d(\mathbf x,\mathbf y)+t)}\cdot \frac{d(\mathbf x,\mathbf y)+t}{\| \mathbf x-\mathbf y\|^2+t^2}\cdot \ln\Big( 1+\frac{\| \mathbf x-\mathbf y\|+t}{d(\mathbf x,\mathbf y)+t}   \Big).
\end{equation}
  \end{proposition}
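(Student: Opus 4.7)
The plan is to start from the subordination formula~\eqref{eq:subordination} and insert the improved heat kernel bound from Theorem~\ref{teo:heat_new}, rewritten using the identity $(1+\|\mathbf x-\mathbf y\|/\sqrt{s})^{-2}=s/(s+\|\mathbf x-\mathbf y\|^2)$ in the form
$$h_s(\mathbf x,\mathbf y)\lesssim\frac{1}{V(\mathbf x,\mathbf y,\sqrt{s})}\cdot\frac{s}{s+\|\mathbf x-\mathbf y\|^2}\cdot e^{-cd(\mathbf x,\mathbf y)^2/s}.$$
Writing $r=\|\mathbf x-\mathbf y\|$ and $D=d(\mathbf x,\mathbf y)$, substituting $s=t^2/(4u)$ in~\eqref{eq:subordination} and then making the change of variables $\rho=t/(2\sqrt u)$, the proof reduces to bounding
$$I:=\int_0^{\infty}\frac{t}{V(\mathbf x,\mathbf y,\rho)(\rho^2+r^2)}\exp\Big(-c'\frac{t^2+D^2}{\rho^2}\Big)\,d\rho$$
by the right-hand sides of~\eqref{eq:Poisson_new} or~\eqref{eq:Poisson_dim1_new}.

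I would split $I$ at the natural scale $\rho_0:=D+t$ (using $t^2+D^2\gtrsim\rho_0^2$). On the far piece $\rho\geq\rho_0$ the exponential factor is $O(1)$, and~\eqref{eq:growth} yields the lower bound $V(\mathbf x,\mathbf y,\rho)\gtrsim V(\mathbf x,\mathbf y,\rho_0)(\rho/\rho_0)^{N}$, reducing matters to estimating
$$\frac{t\rho_0^N}{V(\mathbf x,\mathbf y,\rho_0)}\int_{\rho_0}^{\infty}\frac{d\rho}{\rho^{N}(\rho^2+r^2)}.$$
Splitting this last integral at $\rho=r$ (when $r\geq\rho_0$) and computing explicitly produces, for $N\geq 2$, a contribution of order $\rho_0^{\,1-N}/(r^2+t^2)$, matching the target in~\eqref{eq:Poisson_new}; for $N=1$ the subinterval $[\rho_0,r]$ contributes $r^{-2}\ln(r/\rho_0)\leq r^{-2}\ln\bigl(1+(r+t)/(D+t)\bigr)$, which accounts exactly for the extra logarithm in~\eqref{eq:Poisson_dim1_new}. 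On the near piece $\rho\leq\rho_0$ the Gaussian $\exp(-c'\rho_0^2/\rho^2)$ dominates any polynomial growth; applying the other direction of~\eqref{eq:growth}, namely $V(\mathbf x,\mathbf y,\rho)\gtrsim V(\mathbf x,\mathbf y,\rho_0)(\rho/\rho_0)^{\mathbf N}$, and absorbing the blow-up through $\exp(-c'\rho_0^2/\rho^2)\lesssim(\rho/\rho_0)^{\mathbf N+2}$, one is reduced to computing $\int_0^{\rho_0}\rho^2/(\rho^2+r^2)\,d\rho$, which is $\lesssim\rho_0$ when $r\leq\rho_0$ and $\lesssim\rho_0^3/r^2$ when $r\geq\rho_0$. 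Both cases are consistent with the target after observing $r^2+t^2\sim\rho_0^2$ in the first case and $r^2+t^2\sim r^2$ in the second.

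The main technical obstacle is the case analysis on the far piece: one has to split according to whether $r\leq\rho_0$ or $r\geq\rho_0$ and, in the latter subcase, again at $\rho=r$. It is precisely in this step that the dichotomy between $N\geq 2$ and $N=1$ emerges, since the slow growth of the measure in dimension one fails to make $\int_{\rho_0}^{r}d\rho/\rho$ convergent and instead yields the logarithmic correction in~\eqref{eq:Poisson_dim1_new}. Everything else is routine bookkeeping and elementary calculus.
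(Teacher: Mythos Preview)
Your argument is correct and follows essentially the same route as the paper: subordination formula, the improved heat bound of Theorem~\ref{teo:heat_new}, the growth estimate~\eqref{eq:growth}, and a split of the $u$-integral at the scale corresponding to $d(\mathbf x,\mathbf y)+t$. The only difference is organizational: the paper treats the cases $d(\mathbf x,\mathbf y)\le t$ and $d(\mathbf x,\mathbf y)>t$ separately (splitting the $u$-integral at $1/4$ and at $t^2/d(\mathbf x,\mathbf y)^2$, respectively), whereas your change of variables $\rho=t/(2\sqrt{u})$ merges both into a single split at $\rho_0=d(\mathbf x,\mathbf y)+t$, which is slightly cleaner but otherwise identical in substance.
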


\begin{proof}
The proof is similar to the proof of Proposition 6 of \cite{DP} and uses \eqref{eq:subordination} together with \eqref{eq:heat2}. We present the details. In order to prove~\eqref{eq:Poisson_new}, we first consider  the case  $d(\mathbf x,\mathbf y) \leq t$. In this case $d(\mathbf x,\mathbf y)+t\simeq t$.  If $\|\mathbf{x}-\mathbf{y}\|<t$ then~\eqref{eq:Poisson_new} reduces to \eqref{eq:Poisson_up}. If $\|\mathbf{x}-\mathbf{y}\| \geq t$ then by~\eqref{eq:subordination} and~\eqref{eq:heat2},
\begin{equation*}\begin{split}
p_t(\mathbf x,\mathbf y) \lesssim & w(B(\mathbf{x},t))^{-1}\int_{0}^{\infty} e^{-u}\frac{t^2/(4u)}{\|\mathbf{x}-\mathbf{y}\|^2+t^2/(4u)}\frac{w(B(\mathbf{x},t))}{w\Big(B\left( \mathbf x,\frac{t}{2\sqrt{u}}\Big)\right)} \frac{du}{\sqrt{u}} \\ \lesssim& w(B(\mathbf{x},t))^{-1}\int_{0}^{1/4} \frac{t^2/(4u)}{\|\mathbf{x}-\mathbf{y}\|^2} (\sqrt{u})^{N} \frac{du}{\sqrt{u}}\\&+w(B(\mathbf{x},t))^{-1}\int_{1/4}^{\infty}e^{-u} \frac{t^2}{\|\mathbf{x}-\mathbf{y}\|^2} (\sqrt{u})^{\mathbf{N}} \frac{du}{\sqrt{u}}
\\\lesssim & w(B(\mathbf{x},t))^{-1}\int_{0}^{1/4} \frac{t^2/(4u)}{\|\mathbf{x}-\mathbf{y}\|^2} (\sqrt{u})^{2} \frac{du}{\sqrt{u}}+w(B(\mathbf{x},t))^{-1} \frac{t^2}{\|\mathbf{x}-\mathbf{y}\|^2}
\\\lesssim & w(B(\mathbf{x},t))^{-1}\frac{t^2}{\|\mathbf{x}-\mathbf{y}\|^2}.
\end{split}\end{equation*}
Now we turn to the case $\|\mathbf{x}-\mathbf{y}\| \geq d(\mathbf x,\mathbf y)\geq t$. Then $d(\mathbf x,\mathbf y)+t\simeq d(\mathbf x,\mathbf y)$. Using Theorem \ref{teo:heat_new}, we have
    \begin{equation*}\begin{split}
    p_t(\mathbf x, \mathbf y)
    &\lesssim \int_0^{\infty} \frac{e^{-u}
    \exp (-4c ud(\mathbf x,\mathbf y)^2\slash t^2)}{w (B(\mathbf x,\frac{t}{2\sqrt{u}}))} \frac{t^2/(4u)}{\|\mathbf{x}-\mathbf{y}\|^2+t^2/(4u)} \frac{du}{\sqrt{u}} \\&= \int_0^{t^2\slash d(\mathbf x,\mathbf y)^2} +
    \int_{t^2\slash d(\mathbf x,\mathbf y)^2}^\infty = J_1 + J_2.
    \end{split}\end{equation*}
Further, since $N\geq 2$,
\begin{equation*}\label{eq:int_1}\begin{split}
J_1  &\lesssim  w(B(\mathbf{x},d(\mathbf{x},\mathbf{y})))^{-1}\int_0^{t^2\slash d(\mathbf x,\mathbf y)^2} \frac{w(B(\mathbf{x},d(\mathbf{x},\mathbf{y})))}{w (B(\mathbf x,\frac{t}{2\sqrt{u}}))} \frac{t^2/(4u)}{\|\mathbf{x}-\mathbf{y}\|^2+t^2/(4u)} \frac{du}{\sqrt{u}}\\
&\lesssim w(B(\mathbf{x},d(\mathbf{x},\mathbf{y})))^{-1}\int_0^{t^2\slash d(\mathbf x,\mathbf y)^2} \left(\frac{\sqrt{u}d(\mathbf{x},\mathbf{y})}{t}\right)^{N} \frac{t^2/(4u)}{\|\mathbf{x}-\mathbf{y}\|^2+t^2/(4u)} \frac{du}{\sqrt{u}}\\
&\lesssim w(B(\mathbf{x},d(\mathbf{x},\mathbf{y})))^{-1}\int_0^{t^2\slash d(\mathbf x,\mathbf y)^2} \left(\frac{\sqrt{u}d(\mathbf{x},\mathbf{y})}{t}\right)^{2} \frac{t^2/(4u)}{\|\mathbf{x}-\mathbf{y}\|^2} \frac{du}{\sqrt{u}}\\&\lesssim w(B(\mathbf{x},d(\mathbf{x},\mathbf{y})))^{-1}\int_0^{t^2\slash d(\mathbf x,\mathbf y)^2} \frac{d(\mathbf{x},\mathbf{y})^2}{\|\mathbf{x}-\mathbf{y}\|^2}\frac{du}{\sqrt{u}}\\&\lesssim w(B(\mathbf{x},d(\mathbf{x},\mathbf{y})))^{-1}\frac{td(\mathbf{x},\mathbf{y})}{\|\mathbf{x}-\mathbf{y}\|^2}.
\end{split}\end{equation*}
For $J_2$ we obtain
\begin{equation*}\label{eq:int_2}\begin{split}
J_2&\lesssim \frac{1}{w(B(\mathbf x,d(\mathbf x,\mathbf y)))}
\int\limits_{\frac{t^2}{d(\mathbf x,\mathbf y)^2}}^{\infty} e^{\frac{-4c ud(\mathbf x,\mathbf y)^2} {t^2}}\Big(\frac{2d(\mathbf x,\mathbf y)\sqrt{u}}{t}\Big)^{\mathbf N}\frac{t^2\, du}{4\|\mathbf{x}-\mathbf{y}\|^2u^{3/2}} \\&\lesssim
\frac{1}{w(B(\mathbf x,d(\mathbf x,\mathbf y)))}
\int\limits_{\frac{t^2}{d(\mathbf x,\mathbf y)^2}}^{\infty}\frac{t^2}{\|\mathbf{x}-\mathbf{y}\|^2} \frac{du}{u^{3/2}} \lesssim w(B(\mathbf x,d(\mathbf x,\mathbf y)))^{-1}\frac{td(\mathbf{x},\mathbf{y})}{\|\mathbf{x}-\mathbf{y}\|^2}.
\end{split}\end{equation*}
The proof of \eqref{eq:Poisson_dim1_new} goes in a similar way. We omit the details. \end{proof}

\section{Atomic decompositions - proof of Theorem \ref{teo:main1}}\label{sec:SectionDecomp}

\noindent
{\bf Inclusion $H^1_{(1,q)} \subseteq H^1_{\Delta}$.}  Let us remark that the inclusion $H^1_{(1,q)} \subseteq H^1_{\Delta}$ and the second inequality in \eqref{eq:main}
are easy consequences of~\eqref{eq:DtDxDyHeat} and~\eqref{eq:heat_Holder}.
  The proof is standard (see e.g.~\cite{St2}). To this end, it is enough to prove that there is $C>0$ such that $\|\mathcal{M}_{H}a\|_{L^1(dw)} \leq C$ for any $(1,q)$-atom. Let $a$ be an $(1,q)$-atom associated with $B(\mathbf{x}_0,r)$. It follows from~\eqref{heat_radial} that $\mathcal{M}_{H}$ is bounded on $L^q(dw)$, hence by H\"older's inequality and conditions~\eqref{numitem:support} and~\eqref{numitem:size} of Definition~\ref{def:C-W-atoms} we have
\begin{align*}
\|\mathcal{M}_Ha\|_{L^1(\mathcal{O}(B(\mathbf{x}_0,2r)),dw)} &\leq \|\mathcal{M}_Ha\|_{L^q(dw)}w(\mathcal{O}(B(\mathbf{x}_0,2r)))^{1-\frac{1}{q}} \\&\leq C\|a\|_{L^q(dw)}w(\mathcal{O}(B(\mathbf{x}_0,2r)))^{1-\frac{1}{q}} \leq C'.
\end{align*}
We now turn to estimate $\mathcal{M}_Ha$ on $\mathcal{O}(B(\mathbf{x}_0,2r))^c$. Using condition~\eqref{numitem:cancel} of Definition~\ref{def:C-W-atoms} and~\eqref{eq:heat_Holder} we get
\begin{align*}
|\mathcal{M}_Ha(\mathbf{x})| &\leq \sup_{\|\mathbf{x}-\mathbf{y}\|<t} \left|\int_{B(\mathbf{x}_0,r)} (h_{t^2}(\mathbf{y},\mathbf{z})-h_{t^2}(\mathbf{y},\mathbf{x}_0))a(\mathbf{z})\,dw(\mathbf{z})\right|\\& \leq C\sup_{d(\mathbf{x},\mathbf{y})<t}\int \left|\frac{r}{t}\mathcal{G}_{t^2/c}(\mathbf{y},\mathbf{x_0})a(\mathbf{z})\right|\,dw(\mathbf{z})  \\& \leq C\sup_{d(\mathbf{x},\mathbf{y})<t}\frac{r}{t}\mathcal{G}_{t^2/c}(\mathbf{y},\mathbf{x_0}),
\end{align*}
where in the last inequality we have used the fact that $\|a\|_{L^1(dw)} \leq 1$. It is not difficult to check that by~\eqref{eq:growth} and~\eqref{heat_radial} we have
\begin{align*}
\sup_{d(\mathbf{x},\mathbf{y})<t}\frac{r}{t}\mathcal{G}_{t^2/c}(\mathbf{y},\mathbf{x_0}) \leq C\frac{r}{d(\mathbf{x},\mathbf{x}_0)}w(B(\mathbf{x}_0,d(\mathbf{x},\mathbf{x}_0)))^{-1},
\end{align*}
which implies $\|\mathcal{M}_Ha\|_{L^1(\mathcal{O}(B(\mathbf{x}_0,2r))^{c},dw)} \leq C$. \qed

\noindent
{\bf Square function characterization of $H^1_{\Delta}$ and tent spaces.} Let
$$Q_tf=t\sqrt{-{\Delta}}e^{-t\sqrt{-{\Delta}}}f=t\frac{d}{dt} P_tf=\mathcal F^{-1} (t\|\xi\|e^{-t\|\xi\|} \mathcal Ff(\xi)).$$
The operators $Q_t$, initially defined on $L^1(dw)\cup L^2(dw)$,   have the form
$$ Q_tf(\mathbf x)=\int_{\mathbb R^N} q_t(\mathbf x,\mathbf y)f(\mathbf y)\, dw(\mathbf y),$$
where $q_t(\mathbf x,\mathbf y)=\frac{d}{dt}p_t(\mathbf x,\mathbf y)$. It can be easily deduced from \eqref{DtDyPoisson} that $|q_t(\mathbf x,\mathbf y)|\leq C p_t(\mathbf x,\mathbf y)$. Thus for every $1\leq p<\infty$, the operators $Q_t$ are uniformly bounded on $L^p({dw})$.

Consider the square function
\begin{equation}\label{eq:square}
Sf(\mathbf{x})=\left(\iint_{{\|}\mathbf{x}-\mathbf{y}{\|}<t}|Q_tf(\mathbf{y})|^2
\frac{dt\,{dw}(\mathbf{y})}{t\,{w}(B(\mathbf{x},t))}\right)^{1\slash 2}
\end{equation}
{and the space}
$
H^1_{\rm square}=\{ f\in L^1({dw})\,|\,\| Sf\|_{L^1({dw})}<\infty\}.
$
The following theorem was proved in \cite{conjugate}.
\begin{theorem}\label{teo:main5}
The spaces $H^1_{\Delta}$ and $H^1_{\rm square}$ coincide
and the corresponding norms $\| f\|_{H^1_{{\rm max}, H}}$ and $\|Sf\|_{L^1(dw)}$ are equivalent.
\end{theorem}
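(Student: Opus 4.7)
The plan is to prove the two inequalities separately, by the classical Fefferman--Stein / Coifman--Meyer--Stein scheme adapted to the doubling space $(\mathbb R^N,\|\cdot\|,dw)$, exploiting the improved heat and Poisson kernel bounds of Section~\ref{sec:further}. For the direction $\|Sf\|_{L^1(dw)}\lesssim \|f\|_{H^1_{{\rm max},H}}$ I would invoke the atomic decomposition of $H^1_\Delta$ into $(1,2,\Delta,M)$-atoms established in~\cite{conjugate} and reduce to the uniform bound $\|Sa\|_{L^1(dw)}\lesssim 1$ for every such atom $a=\Delta^M b$ supported in $\mathcal O(B(\mathbf y_0,r))$. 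On the expanded orbit $\mathcal O(B(\mathbf y_0,2r))$ one uses the $L^2(dw)$-boundedness of $S$ (immediate from Plancherel for the Dunkl transform, since $Q_t$ is the multiplier $t\|\xi\|e^{-t\|\xi\|}$) together with the atomic $L^2$-normalization; on its complement one writes
\[
Q_ta(\mathbf x)=\int \Delta^M_{\mathbf y}q_t(\mathbf x,\mathbf y)\,b(\mathbf y)\,dw(\mathbf y)
\]
by self-adjointness of $\Delta$, applies the Poisson bounds \eqref{eq:Poisson_new} and \eqref{DtDyPoisson} to extract a gain of $(r/(t+d(\mathbf x,\mathcal O(B))))^{2M}$, and sums over dyadic annuli after integrating in the Whitney cone.

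For the reverse inequality $\|f\|_{H^1_{{\rm max},H}}\lesssim \|Sf\|_{L^1(dw)}$ the route goes through tent spaces. The Calder\'on reproducing formula
\[
f=4\int_0^\infty Q_t^{\,2}f\,\frac{dt}{t}
\]
is valid by spectral calculus since $\int_0^\infty s\lambda^2 e^{-s\lambda}\,ds=1$ for $\lambda>0$ and the constant component of $f$ is annihilated by $Q_t$. The function $F(t,\mathbf y):=Q_tf(\mathbf y)$ satisfies $\|F\|_{T^1(dw)}=\|Sf\|_{L^1(dw)}$, so the Coifman--Meyer--Stein atomic decomposition of $T^1$ on the doubling metric space $(\mathbb R^N,\|\cdot\|,dw)$ yields $F=\sum_j\lambda_j A_j$ with $\sum_j|\lambda_j|\lesssim\|Sf\|_{L^1(dw)}$, each $A_j$ a tent atom supported in $\widehat{B_j}$ over a Euclidean ball $B_j=B(\mathbf x_j,r_j)$ and $\|A_j\|_{L^2(t^{-1}dt\,dw)}\le w(B_j)^{-1/2}$. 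Substituting gives $f=4\sum_j\lambda_j m_j$ with
\[
m_j(\mathbf x)=\int_0^{r_j}\!\!\int_{B_j} q_t(\mathbf x,\mathbf y)\,A_j(t,\mathbf y)\,dw(\mathbf y)\,\frac{dt}{t},
\]
and the spectral identity $q_t=-t\Delta\,(\sqrt{-\Delta})^{-1}e^{-t\sqrt{-\Delta}}$ allows one to rewrite $m_j=\Delta b_j$ for an explicit $b_j$. The plan is then to verify that a fixed multiple of $m_j$ is a $(1,2,\Delta,1)$-atom in the sense of Definition~\ref{def:atom}, whence the inclusion $H^1_{(1,2,\Delta,1)}\hookrightarrow H^1_\Delta$ already proved in~\cite{conjugate} gives $\|\mathcal M_H m_j\|_{L^1(dw)}\lesssim 1$, and summation completes the proof.

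The main obstacle is the atomic verification just mentioned: one must show $\supp (\Delta^\ell b_j)\subset \mathcal O(cB_j)$ for $\ell=0,1$ with a universal constant $c$, together with $\|(r_j^2\Delta)^\ell b_j\|_{L^2(dw)}\lesssim r_j^{2}w(B_j)^{-1/2}$. The support of $m_j$ is enlarged from $B_j$ to a $G$-orbit by the Dunkl translations hidden in $q_t$ (equivalently in $e^{-t\sqrt{-\Delta}}$), and bounding this enlargement by a dilation factor independent of $j$ is precisely where the compact-support estimate~\eqref{eq:compact} for Dunkl translations of radial functions and the improved pointwise bound of Theorem~\ref{teo:heat_new} become essential. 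The $L^2$-size condition then reduces, via Cauchy--Schwarz in the cone integral defining $b_j$ and the uniform $L^2\to L^2$ bound for $Q_t$, to the tent-atom normalization of $A_j$. Step~1 is comparatively routine once the off-diagonal Poisson bounds of Section~\ref{sec:further} are at hand.
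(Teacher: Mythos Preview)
The paper does not prove Theorem~\ref{teo:main5}: it is quoted from \cite{conjugate} as an already established result and then used as input for Theorem~\ref{teo:main1}. So there is no ``paper's own proof'' to compare against, and your outline is an attempt to reconstruct the argument of \cite{conjugate}.

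Your direction $\|Sf\|_{L^1(dw)}\lesssim\|f\|_{H^1_{\max,H}}$ is plausible. The reverse direction, however, has a real gap. You run the reproducing formula $f=4\int_0^\infty Q_t^{\,2}f\,\tfrac{dt}{t}$ and then claim the pieces
\[
m_j(\mathbf x)=\int_0^{r_j}\!\!\int_{B_j} q_t(\mathbf x,\mathbf y)\,A_j(t,\mathbf y)\,dw(\mathbf y)\,\frac{dt}{t}
\]
are (multiples of) $(1,2,\Delta,1)$-atoms, in particular that $\supp(\Delta^\ell b_j)\subset\mathcal O(cB_j)$. This support condition is false: the kernel $q_t(\mathbf x,\mathbf y)=t\partial_t p_t(\mathbf x,\mathbf y)$ is \emph{not} compactly supported---by \eqref{eq:Poisson_up}--\eqref{DtDyPoisson} it has only polynomial decay in $d(\mathbf x,\mathbf y)/t$---so $m_j$ and $b_j$ live on all of $\mathbb R^N$. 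Your appeal to \eqref{eq:compact} is misplaced, since Corollary~\ref{coro:corollary} concerns Dunkl translates of \emph{compactly supported} radial functions and says nothing about $q_t$. The standard cure (visible in Section~\ref{sec:SectionDecomp} of this paper, and presumably in \cite{conjugate}) is an asymmetric Calder\'on formula $f=\int_0^\infty\Psi_t*Q_tf\,\tfrac{dt}{t}$ with a radial $\Psi\in C_c^\infty(B(0,1/4))$, $\int\Psi\,dw=0$; then $\Psi_t(\mathbf x,\mathbf y)=0$ for $d(\mathbf x,\mathbf y)>t$ and the support condition is automatic. Note also that the improved bounds of Section~\ref{sec:further} are new to \emph{this} paper and were not available in \cite{conjugate}, so a proof of Theorem~\ref{teo:main5} cannot legitimately invoke them.
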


In order to prove our main result about atomic decomposition we use relation of $H^1_{{\rm square}}$ with the tent space $T_2^1$. The tent spaces were introduced in  Coifman, Meyer, and Stein~\cite{CMS}.
Recall  that a function $F$ defined on $\mathbb R_+\times \mathbb R^N$ is in the tent space $T_2^p$ if $\| F\|_{T_2^p}:=\| \mathcal AF(\mathbf x)\|_{L^p(dw)}<\infty$, where
$$\mathcal AF(\mathbf{x}) :=\Big( \int_0^\infty\int_{\|\mathbf{y}-\mathbf{x}\|<t} |F(t,\mathbf{y})|^2\frac{{dw}(\mathbf{y})}{{w}(B(\mathbf{x},t))}\frac{dt}{t}\Big)^{1\slash 2}.$$
So, $f\in H^1_{\rm square}$ if and only if  $F(t,\mathbf x)=Q_tf(\mathbf x)$ belongs to the tent space $T_2^1$.

A measurable function $A(t,\mathbf x)$ is said to be a  $T_2^1$-{\it atom} if there is a Euclidean ball $B=B(\mathbf y_0,r)$, such that

\noindent $\bullet$ $\supp A\subset \widehat B=\{(t,\mathbf x)\in \mathbb R_+\times \mathbb R^N\,|\, B(\mathbf x,t)\subset B(\mathbf y_0, r)\}$;

\noindent $\bullet$ $\int_0^\infty \int_{\mathbb R^N} |A(t,\mathbf x)|^2\, dw(\mathbf x)\frac{dt}{t}\leq w(B)^{-1}.$

It is well known that $F(t,\mathbf x)$ belongs to $T_2^1$ if and only if there are sequences $A_j$ of $T_2^1$-atoms and $\lambda_j\in\mathbb C$ such that
$$  \sum_{j=1}^{\infty} \lambda_j A_j=F,\ \ \ \sum_{j=1}^{\infty} |\lambda_j|\sim \| F\|_{T_2^1},$$
where the convergence is in $T_2^1$ norm and a.e. (see \cite{CMS} and \cite{Russ}).

\begin{remark}\label{rem:Russ}\normalfont
The functions $\lambda_jA_j(t,\mathbf x)$ can be taken of the form
$$\lambda_jA_j(t,\mathbf x)=F(t,\mathbf x)\chi_{S_j}(t,\mathbf x),$$
where $S_j\subset \mathbb R_+\times \mathbb R^N$ are mutually disjoint (see \cite{Russ}).
 \end{remark}
\noindent
{\bf Calder\'on reproducing formula.}
From now on we choose a radial function $\Psi\in C_c^\infty (B(0,1\slash 4))$ satisfying  $\int \Psi (\mathbf x)\, dw(\mathbf x)=0$ such that  the following Calder\'on  reproducing formula
\begin{align*}
f(\mathbf x)=\int_0^\infty \Psi_t(\mathbf x,\mathbf y)Q_tf(\mathbf y)\, dw(\mathbf y)\frac{dt}{t}
\end{align*}
holds for all $f \in L^2(dw)$ with the convergence in $L^2(dw)$ (see~\cite{conjugate}). Let us recall that $\Psi_t(\mathbf x,\mathbf y)=\tau_{\mathbf x} \Psi_t(-\mathbf y)$, $\Psi_t(\mathbf x)=t^{-\mathbf N} \Psi (\mathbf x\slash t)$, $\int\Psi_t(\mathbf x,\mathbf y)\, dw(\mathbf y)=0$, and $\Psi_t(\mathbf x,\mathbf y)=0$ if $d(\mathbf x,\mathbf y)>t$.

\noindent
{\bf Atomic decomposition  of $H^1_{\rm square}$ into $(1,2)$-atoms.}
We are now in a position to prove decomposition of $f\in H^1_{\rm square}=H^1_{\Delta}$ into $(1,2)$-atoms. We start our proof by assuming additionally that $f\in L^2(dw)$. Then this assumption is easily removed by the approximation argument presented in~\cite[Theorem 11.25, Lemma 13.8]{conjugate}. Set
\begin{align*}
\pi_{\Psi}F(\mathbf x)=\int_0^\infty \int_{\mathbb R^N} \Psi_t(\mathbf x,\mathbf y)F(t,\mathbf y)\, dw(\mathbf y)\frac{dt}{t}.
\end{align*}
Then $\| \pi_{\Psi} F\|_{L^2(dw)} \leq C \| F\|_{T_2^2}$.
Let $F(t,\mathbf x)=Q_tf(\mathbf x)$. Note that $F\in T_2^1\cap T_2^2$.  Applying atomic decomposition of $F$ as a function in $T_2^1$ combined with Remark \ref{rem:Russ}, we get
\begin{align*}
f(\mathbf x)=\pi_{\Psi} F(\mathbf x)=\sum_{j=1}^\infty \lambda_j \pi_\Psi A_j(\mathbf x),
\end{align*}
where the convergence is in $L^2(dw)$. Hence,  it suffices to show that there is a constant $C>0$ such that
 \begin{equation}\label{eq:A_to_a}
 \| \pi_{\Psi}A\|_{(H^1_{(1,2)}} \leq C
 \end{equation}
 for any $A(t,\mathbf x)$ being a $T_2^1$-atom.
To this end assume that $A$ is associated with $\widehat B$, where $B=B(\mathbf y_0,r)$. Set
\begin{align*}
& a(t,\mathbf x)=\int_{\mathbb R^N} \Psi_t(\mathbf x,\mathbf y)A(t,\mathbf y)\, dw(\mathbf y),\\
&g(\mathbf x)=\pi_\Psi A(\mathbf x)=
\int_0^\infty \int_{B(\mathbf y_0, r)} \Psi_t(\mathbf x,\mathbf y)A(t,\mathbf y)\, dw(\mathbf y)\frac{dt}{t}=\int_0^r a(t,\mathbf x)\, \frac{dt}{t}.
\end{align*}
Then $\| g\|_{L^2(dw)}\leq C \| A\|_{T_2^1}\leq \frac{C'}{w(B)^{1\slash 2}}$, $ \supp g\subset \mathcal O(B)=\bigcup\limits_{\sigma\in G} B(\sigma(\mathbf y_0), r)$, and   $\int g(\mathbf x)\, dw(\mathbf x)=0$.

We denote by $M_{{\rm HL}}$ the Hardy-Littlewood maximal function
\begin{align*}
M_{{\rm HL}} f(\mathbf x)=\sup_{B(\mathbf y,R)\ni \mathbf x} \frac{1}{w(B(\mathbf y,R))}\int_{B(\mathbf y,R)}|f(\mathbf y')|\, dw(\mathbf y').
\end{align*}

\begin{lemma}
\label{lem:pointwise_T12_atom}
Assume that $\sigma \in G$ is such that  $\|\sigma(\mathbf{y}_0)-\mathbf{y}_0\|>4r$. Then for $\mathbf{x}$ such that $\mathbf{x} \in B(\sigma(\mathbf{y}_0),r)$
 we have
\begin{align*}
|a(t,\mathbf{x})|\leq C\frac{t^2}{\|\sigma(\mathbf{y}_0)-\mathbf{y}_0\|^{2}}\sum_{\sigma' \in G}M_{{\rm HL}}(A(t,\cdot))(\sigma'(\mathbf{x})).
\end{align*}
The constant $C>0$ is independent of $A(t,\mathbf{x})$,  $\sigma \in G$, $\mathbf x \in \mathbb{R}^N$ and $t>0$.
\end{lemma}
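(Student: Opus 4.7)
The plan is to exploit two properties of $\Psi_t(\mathbf{x}, \mathbf{y})$ that follow from Corollary \ref{coro:corollary}: first, $\Psi_t(\mathbf{x},\mathbf{y})$ vanishes outside the orbit-tube $\{d(\mathbf{x},\mathbf{y}) \leq t\}$, since $\Psi$ is supported in $B(0, 1/4) \subset B(0,1)$; second, on this tube it enjoys the improved pointwise bound
\[
|\Psi_t(\mathbf{x}, \mathbf{y})| \lesssim \frac{1}{V(\mathbf{x},\mathbf{y},t)}\Big(1 + \tfrac{\|\mathbf{x}-\mathbf{y}\|}{t}\Big)^{-2}.
\]
The $(1+\|\mathbf{x}-\mathbf{y}\|/t)^{-2}$ factor is the source of the quadratic decay in the conclusion, and the $V(\mathbf{x},\mathbf{y},t)^{-1}$ factor will absorb into the Hardy--Littlewood maximal function after splitting by reflections.

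First I note that the $T_2^1$-atom condition forces $\operatorname{supp} A(t,\cdot) \subset B(\mathbf{y}_0, r)$ (since $\widehat B$ requires $B(\mathbf{y},t) \subset B(\mathbf{y}_0,r)$). Thus $\mathbf{y}$ ranges over $B(\mathbf{y}_0,r)$ in the integral defining $a(t,\mathbf{x})$. The standing hypothesis $\|\sigma(\mathbf{y}_0) - \mathbf{y}_0\| > 4r$ combined with $\mathbf{x} \in B(\sigma(\mathbf{y}_0), r)$ gives the clean Euclidean lower bound
\[
\|\mathbf{x} - \mathbf{y}\| \geq \|\sigma(\mathbf{y}_0) - \mathbf{y}_0\| - 2r \geq \tfrac{1}{2}\|\sigma(\mathbf{y}_0) - \mathbf{y}_0\|
\]
for every $\mathbf{y} \in B(\mathbf{y}_0, r)$. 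Substituting this into the quadratic factor immediately produces the desired ratio $t^2/\|\sigma(\mathbf{y}_0) - \mathbf{y}_0\|^2$.

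Next I decompose the support condition along the $G$-orbit:
\[
\{\mathbf{y} : d(\mathbf{x}, \mathbf{y}) \leq t\} \subset \bigcup_{\sigma' \in G} B(\sigma'(\mathbf{x}), t).
\]
For $\mathbf{y} \in B(\sigma'(\mathbf{x}), t)$ the doubling property (\ref{eq:doubling}) together with the $G$-invariance of $w$ gives $V(\mathbf{x},\mathbf{y},t) \geq w(B(\mathbf{x},t)) = w(B(\sigma'(\mathbf{x}),t))$. Hence, on each ball $B(\sigma'(\mathbf{x}),t)$,
\[
|\Psi_t(\mathbf{x}, \mathbf{y})| \, |A(t,\mathbf{y})| \lesssim \frac{t^{2}}{\|\sigma(\mathbf{y}_0)-\mathbf{y}_0\|^{2}} \cdot \frac{|A(t,\mathbf{y})|}{w(B(\sigma'(\mathbf{x}),t))},
\]
and integrating this bound over $B(\sigma'(\mathbf{x}),t)$ yields $\frac{t^{2}}{\|\sigma(\mathbf{y}_0)-\mathbf{y}_0\|^{2}}\, M_{\mathrm{HL}}(A(t,\cdot))(\sigma'(\mathbf{x}))$ after recognizing the averaging quotient.

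Summing over $\sigma' \in G$ produces exactly the asserted inequality. There is no real obstacle: the entire argument reduces to the interplay between the $G$-orbit support of $\Psi_t$ and the extra decay provided by Corollary \ref{coro:corollary}. It is precisely the improved estimate \eqref{eq:compact} -- not merely the support condition, which would give the factor $\chi_{\{d(\mathbf{x},\mathbf{y})\leq t\}}$ without any quadratic decay -- that makes this lemma work, and this is why it is placed after Section \ref{sec:further}.
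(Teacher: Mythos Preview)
Your argument is correct and follows essentially the same route as the paper's proof: both apply Corollary~\ref{coro:corollary} to extract the factor $t^{2}/\|\mathbf{x}-\mathbf{y}\|^{2}\sim t^{2}/\|\sigma(\mathbf{y}_0)-\mathbf{y}_0\|^{2}$, then cover $\{d(\mathbf{x},\mathbf{y})\le t\}$ by the balls $B(\sigma'(\mathbf{x}),t)$ and bound the resulting averages by $M_{\mathrm{HL}}(A(t,\cdot))(\sigma'(\mathbf{x}))$. Your justification of $V(\mathbf{x},\mathbf{y},t)\ge w(B(\sigma'(\mathbf{x}),t))$ via the $G$-invariance of $w$ is exactly the point the paper uses implicitly when it replaces $V(\mathbf{x},\mathbf{y},t)$ by $V(\sigma'(\mathbf{x}),\mathbf{y},t)$.
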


\begin{proof}
For $(t, \mathbf{y}) \in \supp A \subset \widehat{B}$ and $\mathbf{x} \in B(\sigma(\mathbf{y}_0),r)$ we have $\|\mathbf{x}-\mathbf{y}\| \sim \|\mathbf{y}_0-\sigma(\mathbf{y}_0)\|$. Therefore, by Corollary~\ref{coro:corollary},
\begin{align*}
\left| a(t,\mathbf x)\right| &\lesssim \int_{\mathbb{R}^N} \frac{t^2}{\| \mathbf x-\mathbf y\|^2V(\mathbf x,\mathbf y, t)}\chi_{[0,1]}\left(\frac{d(\mathbf x,\mathbf y)}{t}\right)|A(t,\mathbf{y})|\,dw(\mathbf{y})\\& \lesssim \sum_{\sigma' \in G} \int_{B(\sigma'(\mathbf{x}),t)}V(\sigma'(\mathbf x),\mathbf y, t)^{-1} \frac{t^2}{\|\sigma(\mathbf{y}_0)-\mathbf{y}_0\|^{2}}|A(t,\mathbf{y})|\,dw(\mathbf{y})\\&\lesssim \left(\sum_{\sigma' \in G}M_{{\rm HL}}(A(t,\cdot))(\sigma'(\mathbf{x}))\right)\frac{t^2}{\|\sigma(\mathbf{y}_0)-\mathbf{y}_0\|^{2}}.
\end{align*}
\end{proof}

\begin{lemma}
\label{lem:L^2}
If $\|\sigma(\mathbf{y}_0)-\mathbf{y}_0\|>4r$, then
\begin{align*}
\|g\|_{L^2(B(\sigma(\mathbf{y}_0),r),dw)} \leq \frac{C}{w(B(\mathbf{y}_0,r))^{1/2}}\frac{r^2}{\|\sigma(\mathbf{y}_0)-\mathbf{y}_0\|^{2}}.
\end{align*}
The constant $C>0$ is independent of $A(t,\mathbf x)$ and $\sigma\in G$.
\end{lemma}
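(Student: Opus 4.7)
The plan is to pass from the pointwise bound of Lemma \ref{lem:pointwise_T12_atom} to an $L^2$-bound on $a(t,\cdot)$, then integrate in $t$ using Minkowski and Cauchy--Schwarz, and finally exploit the $T_2^1$-atom normalization of $A$.

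First, I would apply Minkowski's integral inequality to the representation $g(\mathbf x)=\int_0^r a(t,\mathbf x)\,\frac{dt}{t}$:
\begin{equation*}
\|g\|_{L^2(B(\sigma(\mathbf y_0),r),dw)}\le \int_0^r \|a(t,\cdot)\|_{L^2(B(\sigma(\mathbf y_0),r),dw)}\,\frac{dt}{t}.
\end{equation*}
Next, for each fixed $t\in(0,r]$, I would invoke Lemma \ref{lem:pointwise_T12_atom}, which is applicable because $\|\sigma(\mathbf y_0)-\mathbf y_0\|>4r$. This gives
\begin{equation*}
\|a(t,\cdot)\|_{L^2(B(\sigma(\mathbf y_0),r),dw)}\le \frac{C\,t^2}{\|\sigma(\mathbf y_0)-\mathbf y_0\|^{2}}\sum_{\sigma'\in G}\bigl\|M_{\mathrm{HL}}(A(t,\cdot))\circ\sigma'\bigr\|_{L^2(B(\sigma(\mathbf y_0),r),dw)}.
\end{equation*}
Since $dw$ is $G$-invariant, each summand equals $\|M_{\mathrm{HL}}(A(t,\cdot))\|_{L^2(\sigma'(B(\sigma(\mathbf y_0),r)),dw)}$, which is bounded by $\|M_{\mathrm{HL}}(A(t,\cdot))\|_{L^2(dw)}$. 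Because $(\mathbb R^N,\|\cdot\|,dw)$ is a doubling space of homogeneous type (see \eqref{eq:doubling}), the Hardy--Littlewood maximal operator is bounded on $L^2(dw)$, so the whole sum is controlled by a constant times $\|A(t,\cdot)\|_{L^2(dw)}$.

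Combining the displays I obtain
\begin{equation*}
\|g\|_{L^2(B(\sigma(\mathbf y_0),r),dw)}\le \frac{C}{\|\sigma(\mathbf y_0)-\mathbf y_0\|^{2}}\int_0^r t\,\|A(t,\cdot)\|_{L^2(dw)}\,dt.
\end{equation*}
Now I would apply Cauchy--Schwarz in $t$, splitting $t=t^{3/2}\cdot t^{-1/2}$:
\begin{equation*}
\int_0^r t\,\|A(t,\cdot)\|_{L^2(dw)}\,dt\le \Bigl(\int_0^r t^{3}\,dt\Bigr)^{1/2}\Bigl(\int_0^r \|A(t,\cdot)\|_{L^2(dw)}^{2}\,\frac{dt}{t}\Bigr)^{1/2}\le \frac{r^{2}}{2}\,w(B(\mathbf y_0,r))^{-1/2},
\end{equation*}
where the last step uses the defining normalization of the $T_2^1$-atom $A$. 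Substituting this back yields the desired estimate. The main obstacle, such as it is, lies in organizing the quasi-geometric bookkeeping: one must remember to use $G$-invariance of $dw$ to absorb the orbit sum into the $L^2$-boundedness of $M_{\mathrm{HL}}$, and one must choose the Cauchy--Schwarz split so that the resulting $t$-weight matches exactly the $T_2^1$ normalization. Once these are set up correctly, the calculation goes through cleanly.
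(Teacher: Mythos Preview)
Your proof is correct and follows essentially the same route as the paper: Minkowski in $t$, the pointwise bound from Lemma~\ref{lem:pointwise_T12_atom}, $G$-invariance of $dw$ together with the $L^2$-boundedness of $M_{\mathrm{HL}}$, and then Cauchy--Schwarz against the $T_2^1$-atom normalization. The only cosmetic difference is that the paper keeps the factor $\|\sigma(\mathbf y_0)-\mathbf y_0\|^{-2}$ inside the $t$-integral before applying Cauchy--Schwarz, while you pull it out first; the computations are algebraically identical.
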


\begin{proof}
By the Minkowski integral inequality, Lemma~\ref{lem:pointwise_T12_atom}, and the  Cauchy-Schwarz inequality, we have
\begin{align*}
&\left(\int_{B(\sigma(\mathbf{y}_0),r)}|g(\mathbf{x})|^2 \,dw(\mathbf{x})\right)^{1/2} \\&\leq \int_0^{r}\left(\int_{B(\sigma(\mathbf{y}_0),r)}|a(t,\mathbf{x})|^2 \,dw(\mathbf{x})\right)^{1/2}\,\frac{dt}{t} \\&\lesssim \int_0^{r}\left(\int\limits_{B(\sigma(\mathbf{y}_0),r)}\left(\sum_{\sigma' \in G}M_{{\rm HL}}(A(t,\cdot))(\sigma'(\mathbf{x}))\right)^2\frac{t^4}{\|\sigma(\mathbf{y}_0)-\mathbf{y}_0\|^{4}}\,dw(\mathbf{x})\right)^{1/2}\,\frac{dt}{t} \\
&\lesssim \int_0^{r} \frac{t^2}{\|\sigma(\mathbf{y}_0)-\mathbf{y}_0\|^{2}}\sum_{\sigma' \in G}\left(\int_{\mathbb{R}^N}\big(M_{{\rm HL}}(A(t,\cdot))(\sigma'(\mathbf{x}))\big)^2\,dw(\mathbf{x})\right)^{1/2}\,\frac{dt}{t} \\
&\lesssim \int_0^{r} \frac{t^2}{\|\sigma(\mathbf{y}_0)-\mathbf{y}_0\|^{2}}
\left(\int_{\mathbb{R}^N}|(A(t,\mathbf{x})|^2\,dw(\mathbf{x})\right)^{1/2}\,\frac{dt}{t} \\
&\lesssim \left(\int_0^{r}\frac{t^4}{\|\sigma(\mathbf{y}_0)-\mathbf{y}_0\|^{4}}\,\frac{dt}{t}\right)^{1/2}
\left(\int_0^{r}\int_{\mathbb{R}^N}|A(t,\mathbf{x})|^2\,dw(\mathbf{x})\,\frac{dt}{t}\right)^{1/2} \\&\lesssim \frac{r^2}{\|\sigma(\mathbf{y}_0)-\mathbf{y}_0\|^{2}}\frac{1}{w(B(\mathbf{y}_0,r))^{1/2}}.
\end{align*}
\end{proof}

\begin{proposition}
\label{propo:main}
There exists $C>0$ independent of $A(t,\mathbf{x})$ such that  $g=\pi_{\Psi}A$ can be written as
\begin{align*}
 g=\sum_{j=1}^{\infty}\lambda_j a_j,
\end{align*}
where $a_j$ are $(1,2)$-atoms and $\sum_{j=1}^{\infty}|\lambda_j| \leq C$.
\end{proposition}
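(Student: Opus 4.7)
The plan is to split $g$ into $|G|$ pieces supported in the individual $G$-orbit balls $B(\sigma(\mathbf{y}_0),r)$, turn each piece into a $(1,2)$-atom by subtracting a constant to ensure mean zero, and then transport the resulting averaged corrections back to the central ball $B(\mathbf{y}_0,r)$ along a chain of intermediate Euclidean balls of radius~$r$. The key quantitative input is the decay $r^2/\|\sigma(\mathbf{y}_0)-\mathbf{y}_0\|^2$ of Lemma~\ref{lem:L^2}, which should exactly offset the length of each chain.

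To begin, I would fix any measurable partition $\mathcal{O}(B)=\bigsqcup_{\sigma\in G}U_\sigma$ with $U_\sigma\subset B(\sigma(\mathbf{y}_0),r)$ (this handles possible overlaps of orbit balls), set $g_\sigma=g\chi_{U_\sigma}$, and define $c_\sigma=\int g_\sigma\,dw$, so that $\sum_\sigma c_\sigma=\int g\,dw=0$. Writing
\[
\tilde g_\sigma=g_\sigma-\frac{c_\sigma}{w(B)}\chi_{B(\sigma(\mathbf{y}_0),r)},\qquad
E=g-\sum_\sigma\tilde g_\sigma=\sum_{\sigma\in G}\frac{c_\sigma}{w(B)}\chi_{B(\sigma(\mathbf{y}_0),r)},
\]
each $\tilde g_\sigma$ is supported in the Euclidean ball $B(\sigma(\mathbf{y}_0),r)$, has mean zero, and by Lemma~\ref{lem:L^2} and the $G$-invariance of $w$ satisfies $\|\tilde g_\sigma\|_{L^2(dw)}\le 2\|g_\sigma\|_{L^2(dw)}\le Cw(B)^{-1/2}\min\bigl(1,(r/\|\sigma(\mathbf{y}_0)-\mathbf{y}_0\|)^2\bigr)$. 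Since $w(B(\sigma(\mathbf{y}_0),r))=w(B)$, each $\tilde g_\sigma$ is therefore a bounded multiple of a $(1,2)$-atom; summing over the finite group $G$ gives an $O(1)$ contribution to $\sum_j|\lambda_j|$.

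To decompose $E$, I would use $\sum_\sigma c_\sigma=0$ and $G$-invariance of $w$ to rewrite
\[
E=\sum_{\sigma\neq\mathrm{id}}c_\sigma\left(\frac{\chi_{B(\sigma(\mathbf{y}_0),r)}}{w(B(\sigma(\mathbf{y}_0),r))}-\frac{\chi_{B(\mathbf{y}_0,r)}}{w(B(\mathbf{y}_0,r))}\right),
\]
and for each $\sigma\neq\mathrm{id}$ interpolate the centres by points $\mathbf{y}_0=\mathbf{z}_0,\mathbf{z}_1,\dots,\mathbf{z}_{K_\sigma}=\sigma(\mathbf{y}_0)$ equally spaced on the segment, with $\|\mathbf{z}_{k+1}-\mathbf{z}_k\|\le r$ and $K_\sigma\le\lceil\|\sigma(\mathbf{y}_0)-\mathbf{y}_0\|/r\rceil$. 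The bracketed difference telescopes into a sum of $\phi_k=w(B(\mathbf{z}_{k+1},r))^{-1}\chi_{B(\mathbf{z}_{k+1},r)}-w(B(\mathbf{z}_k,r))^{-1}\chi_{B(\mathbf{z}_k,r)}$, each supported in $B(\mathbf{z}_k,3r)$, of mean zero, and satisfying $\|\phi_k\|_{L^2(dw)}\le Cw(B(\mathbf{z}_k,3r))^{-1/2}$ by the doubling property~\eqref{eq:doubling}. Thus every $c_\sigma\phi_k$ is a scalar multiple of a $(1,2)$-atom with coefficient $\le C|c_\sigma|$, uniformly in $k$.

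The main obstacle is the combined estimate $K_\sigma\cdot|c_\sigma|\le C$ for every $\sigma\neq\mathrm{id}$. For $\sigma$ with $\|\sigma(\mathbf{y}_0)-\mathbf{y}_0\|\le 4r$ this is immediate since $K_\sigma\le 4$ and $|c_\sigma|\le\|g_\sigma\|_{L^2(dw)}w(B)^{1/2}\le C$; for $\sigma$ with $\|\sigma(\mathbf{y}_0)-\mathbf{y}_0\|>4r$ the same Cauchy--Schwarz argument together with Lemma~\ref{lem:L^2} yields $|c_\sigma|\le Cr^2/\|\sigma(\mathbf{y}_0)-\mathbf{y}_0\|^2$, while $K_\sigma\lesssim\|\sigma(\mathbf{y}_0)-\mathbf{y}_0\|/r$, so $K_\sigma|c_\sigma|\lesssim r/\|\sigma(\mathbf{y}_0)-\mathbf{y}_0\|\le 1$. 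Summing over the finite group $G$ then gives $\sum_j|\lambda_j|\le C$. The delicate point is precisely this balance: it is the extra factor $(1+\|\mathbf{x}-\mathbf{y}\|/\sqrt t)^{-2}$ supplied by Theorem~\ref{teo:heat_new} (and passed through Corollary~\ref{coro:corollary} and Lemma~\ref{lem:L^2}) that produces the quadratic decay $r^2/\|\sigma(\mathbf{y}_0)-\mathbf{y}_0\|^2$ needed to defeat the linear growth of the chain length, and this is why the improved heat-kernel bound is indispensable.
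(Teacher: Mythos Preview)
Your proposal is correct and follows essentially the same strategy as the paper's proof: partition $g$ according to the orbit balls, correct each piece to have mean zero, and then telescope the resulting constant corrections back to $B(\mathbf{y}_0,r)$ along chains of balls of radius comparable to $r$, using the quadratic decay $|c_\sigma|\lesssim r^2/\|\sigma(\mathbf{y}_0)-\mathbf{y}_0\|^2$ from Lemma~\ref{lem:L^2} to dominate the linear chain length. The only organizational difference is that the paper treats the ``near'' pieces ($\|\sigma_j(\mathbf{y}_0)-\mathbf{y}_0\|<4r$) and the chain endpoints $b_j$ together as a single atom supported in $B(\mathbf{y}_0,16r)$, whereas you keep each $\tilde g_\sigma$ as its own atom and handle the short chains directly---both bookkeepings yield the same bound.
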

\begin{proof}
Let $\sigma_0=e$ and $G=\{\sigma_0,\sigma_1,\sigma_2,\ldots,\sigma_{|G|-1}\}$. We know that $g=\pi_{\Psi} A$ is supported by
$$\mathcal O(B)=\bigcup_{j=0}^{|G|-1} B(\sigma_j(\mathbf y_0), r).$$
Set $E_0=B(\mathbf{y}_0,r)$,
\begin{align*}
 E_j=B(\sigma_j(\mathbf{y}_0),r) \setminus \bigcup_{i=0}^{j-1}B(\sigma_i(\mathbf{y}_0),r) \qquad \text{ for }j=1,2,\ldots, |G|-1,
\end{align*}
and  $g_j=g\chi_{E_j}$. Then $g=\sum_{j=0}^{|G|-1} g_j$, $\supp g_j\subset E_j\subseteq B(\sigma_j(\mathbf y_0), r))$.
Define
$$\mathcal I=\{j\in \{1,2,...,|G|-1\}\,|\,\|\sigma_j(\mathbf y_0)-\mathbf y_0\| \geq 4r\}, \ \ \mathcal J=\{0,1,...,|G|-1\}\setminus \mathcal I.$$
For  $j\in \mathcal I$ let
$m_j=\left\lfloor (\|\sigma_j(\mathbf{y}_0)-\mathbf{y}_0\|)\slash {r}\right\rfloor $.  Set
\begin{align*}
\mathbf{x}_n^{\{j\}}= \sigma_j(\mathbf y_0)+n\frac{\mathbf y_0-\sigma_j(\mathbf y_0)}{m_j} \qquad \text{ for } n=0,1,...,m_j.
\end{align*}
Then $r\leq \| \mathbf x_n^{\{j\}}-\mathbf x_{n+1}^{\{j\}}\|\leq 2r$. Let $c_j=\int_{\mathbb{R}^N} g_j(\mathbf{x})\,dw(\mathbf{x})$. By Lemma~\ref{lem:L^2} and the Cauchy--Schwarz inequality we have
\begin{equation}
\label{eq:Lemma_L^2_application}
|c_j| \leq C_1\frac{r^2}{\|\sigma_j(\mathbf{y}_0)-\mathbf{y}_0\|^2}.
\end{equation}
Set
\begin{align*}
a_0^{\{j\}}=\frac{\|\sigma_j(\mathbf{y}_0)-\mathbf{y}_0\|^2}{r^2}\left(g_j-c_j\frac{1}{w\big(B(\mathbf{x}_1^{\{j\}},r)\big)}
\chi_{B(\mathbf{x}_1^{\{j\}},r)}\right),
\end{align*}
\begin{align*}
a_n^{\{j\}}=&c_j\frac{\|\sigma_j(\mathbf{y}_0)-\mathbf{y}_0\|^2}{r^2}\frac{1}{w\big(B(\mathbf{x}_{n}^{\{j\}},r)\big)}
\chi_{B(\mathbf{x}_{n}^{\{j\}},r)}\\&-c_j\frac{\|\sigma_j(\mathbf{y}_0)-\mathbf{y}_0\|^2}{r^2}\frac{1}{w\big(B(\mathbf{x}_{n+1}^{\{j\}},r)\big)}\chi_{B(\mathbf{x}_{n+1}^{\{j\}},r)}
\end{align*}
for $n=1,2,\ldots,m_j-1$, and
\begin{align*}
b_j=c_j\frac{1}{w(B(\mathbf{x}_{m_j},r))}\chi_{B(\mathbf{x}_{m_j},r)}=c_j\frac{1}{w(B(\mathbf y_0,r))}\chi_{B(\mathbf y_0,r)}.
\end{align*}
The positions of {$B\big(\mathbf x_n^{\{j\}}, r\big)$} for the root system $A_2$ are schematized in Figure~\ref{Fig}.

\begin{figure}[H]
\centering
\begin{tikzpicture}[scale=3]
\def\radius{0.7cm}
\def\offset{10}
\def\radiuss{0.09cm}

\foreach \angle in {1,...,6}
{
\draw[line width=0.5mm, gray]
(0,0)--({cos(360/6*\angle-30)},{sin(360/6*\angle-30)});
};

\foreach \angle in {0,...,2}
{
\draw[-{Latex[length=7mm, width=5mm]},line width=0.5mm] (0,0)--({cos(360/6*\angle)},{sin(360/6*\angle)});
};

\foreach \angle in {3,4,5}
{
\draw[-{Latex[length=7mm, width=5mm,fill=white]},dotted,line width=0.5mm] (0,0)--({cos(360/6*\angle)},{sin(360/6*\angle)});
};

\foreach \angle in {0,...,2}
{
\draw[fill=mygray, line width=0.3mm] ({\radius*cos(360/3*\angle+30+\offset)},{\radius*sin(360/3*\angle+30+\offset)}) circle (\radiuss);
\draw[fill=mygray, line width=0.3mm] ({\radius*cos(360/3*\angle+30-\offset)},{\radius*sin(360/3*\angle+30-\offset)}) circle (\radiuss);
};

\draw[fill=mygray, line width=0.3mm] ({1.3*\radius*cos(30+\offset)},{1.3*\radius*sin(30+\offset)}) node {$B(\mathbf{y}_0,r)$};

\foreach \angle in {1,2}
{
\pgfmathsetmacro{\number}{floor((veclen(({\radius*cos(360/3*\angle+30+\offset)-\radius*cos(30+\offset)},{\radius*sin(360/3*\angle+30+\offset)-\radius*sin(30+\offset)})))/(1.4*\radiuss)+1)}

\pgfmathsetmacro{\step}{veclen(({\radius*cos(360/3*\angle+30+\offset)-\radius*cos(30+\offset)},{\radius*sin(360/3*\angle+30+\offset)-\radius*sin(30+\offset)}))/(\number*19.5*2)}

\foreach \x in {1,...,\number}
{
\draw ({\radius*cos(360/3*\angle+30+\offset)},{\radius*sin(360/3*\angle+30+\offset)})+({\step*\x*(\radius*cos(30+\offset)-\radius*cos(360/3*\angle+30+\offset))},{\step*\x*(\radius*sin(30+\offset)-\radius*sin(360/3*\angle+30+\offset))}) circle (\radiuss);
};

};

\foreach \angle in {1}
{
\pgfmathsetmacro{\number}{floor((veclen(({\radius*cos(360/3*\angle+30-\offset)-\radius*cos(30-\offset)},{\radius*sin(360/3*\angle+30-\offset)-\radius*sin(30+\offset)})))/(1.4*\radiuss))}

\pgfmathsetmacro{\step}{veclen(({\radius*cos(360/3*\angle+30-\offset)-\radius*cos(30+\offset)},{\radius*sin(360/3*\angle+30-\offset)-\radius*sin(30+\offset)}))/(\number*17*2)}

\foreach \x in {1,...,\number}
{
\draw ({\radius*cos(360/3*\angle+30-\offset)},{\radius*sin(360/3*\angle+30-\offset)})+({\step*\x*(\radius*cos(30+\offset)-\radius*cos(360/3*\angle+30-\offset))},{\step*\x*(\radius*sin(30+\offset)-\radius*sin(360/3*\angle+30-\offset))}) circle (\radiuss);
};

\draw[fill=black, line width=0.3mm] (0,0) circle (0.4mm);

};

\foreach \angle in {2}
{
\pgfmathsetmacro{\number}{floor((veclen(({\radius*cos(360/3*\angle+30-\offset)-\radius*cos(30-\offset)},{\radius*sin(360/3*\angle+30-\offset)-\radius*sin(30+\offset)})))/(1.4*\radiuss))}

\pgfmathsetmacro{\step}{veclen(({\radius*cos(360/3*\angle+30-\offset)-\radius*cos(30+\offset)},{\radius*sin(360/3*\angle+30-\offset)-\radius*sin(30+\offset)}))/(\number*19.5*2)}

\foreach \x in {1,...,\number}
{
\draw ({\radius*cos(360/3*\angle+30-\offset)},{\radius*sin(360/3*\angle+30-\offset)})+({\step*\x*(\radius*cos(30+\offset)-\radius*cos(360/3*\angle+30-\offset))},{\step*\x*(\radius*sin(30+\offset)-\radius*sin(360/3*\angle+30-\offset))}) circle (\radiuss);
};

\draw[fill=black, line width=0.3mm] (0,0) circle (0.4mm);

};
\end{tikzpicture}
\caption{ The balls $B(\mathbf x_n^{\{j\}}, r)$ for the root system $A_2$.}
\label{Fig}
\end{figure}
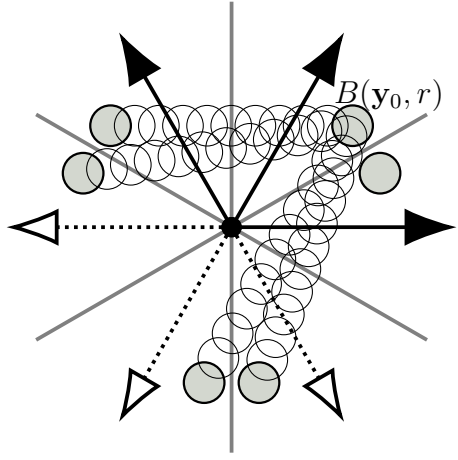
\noindent
Clearly,
\begin{align*}
g_j=\sum_{n=0}^{m_j-1}\frac{r^2}{\|\sigma_j(\mathbf{y}_0)-\mathbf{y}_0\|^2}a_n^{\{j\}}+b_j.
\end{align*}
It follows from Lemma~\ref{lem:L^2} and the doubling property that each function $a_n^{\{j\}}$ (for $j=0,1,\ldots, m_j-1$) is a multiple of a $(1,2)$-atom associated with the ball $B\big(\mathbf x_n^{\{j\}},4 r \big)$. The multiplicity constant depends on $\Psi$ and the doubling constant  but it is independent of $A(t,\mathbf x)$.
Write
\begin{align*}
a=\sum_{j\in \mathcal J} g_j+ \sum_{j\in \mathcal I} b_j.
\end{align*}
Then
\begin{align}\label{eq:decomposition}
g&=\sum_{j=0}^{|G|-1}g_j=a+\sum_{j\in \mathcal I}\sum_{i=0}^{m_j-1}\frac{r^2}{\|\sigma_j(\mathbf{y}_0)-\mathbf{y}_0\|^2}a_{i}^{\{j\}}.
\end{align}
Note that by the  construction above $\supp a \subset B(\mathbf{y}_0,16r)$ and, by Lemma~\ref{lem:L^2} and~\eqref{eq:Lemma_L^2_application}, we have
\begin{align*}
\|a\|_{L^2(dw)}\leq \sum_{j\in \mathcal J} \| g_j\|_{L^2(dw)}+\sum_{j\in \mathcal I} \| b_j\|_{L^2(dw)} \leq C_2\frac{1}{w(B(\mathbf{y}_0,16r))^{1/2}}.
\end{align*}
Moreover, $\int a(\mathbf x)\, dw(\mathbf x)=0$, since $\int g(\mathbf x)\, dw(\mathbf x)=0$ and $\int a_i^{\{j\}}(\mathbf x)\, dw(\mathbf x)=0$.
So the function $a$ is a~multiple of a $(1,2)$-atom. Further,
\begin{align*}
\sum_{j\in \mathcal I} \sum_{i=0}^{m_j-1}\frac{r^2}{\|\sigma_j(\mathbf{y}_0)-\mathbf{y}_0\|^2} \leq \sum_{j\in \mathcal I}\frac{r^2}{\|\sigma_j(\mathbf{y}_0)-\mathbf{y}_0\|^2}m_j \leq |G|\slash 4.
\end{align*}
Therefore~\eqref{eq:decomposition} is the desired atomic decomposition.
\end{proof}
Thus we have proved Theorem \ref{teo:main1}  for $q=2$.

\noindent
{\bf Decomposition into $(1,\infty)$ atoms.}  To finish the proof of Theorem \ref{teo:main1} it suffices to refer to the following known proposition. For the convenience of the reader we provide a short proof.
\begin{proposition}
There is a constant $C>0$ such that any $(1,2)$-atom $a(\mathbf{x})$ can be written as
\begin{align*}
a(\mathbf{x})=\sum_{j=1}^{\infty}\lambda_jb_j(\mathbf{x}),
\end{align*}
where $b_j$ are $(1,\infty)$-atoms and $\sum_{j=1}^{\infty}|\lambda_j| \leq C$.
\end{proposition}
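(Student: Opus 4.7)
The statement is the classical fact that Coifman--Weiss $(1,2)$-atoms decompose into $(1,\infty)$-atoms on any space of homogeneous type; I would prove it by a Calderón--Zygmund decomposition of $a$ at geometrically increasing heights on $(\mathbb R^N,\|\cdot\|,dw)$, which is doubling thanks to \eqref{eq:doubling}. The machinery used---the $L^2(dw)$-boundedness of the Hardy--Littlewood maximal operator $M_{\mathrm{HL}}$, a Whitney covering lemma for open sets, and a subordinate partition of unity---is standard on any doubling metric measure space.

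\textbf{Construction.} Let $a$ be a $(1,2)$-atom with support ball $B=B(\mathbf y_0,r)$ and write $\alpha=w(B)^{-1}$. Put $\Omega_k=\{M_{\mathrm{HL}}a>2^k\alpha\}$ for $k\ge 0$. From $\|a\|_{L^2(dw)}\le w(B)^{-1/2}$ and the $L^2$-bound on $M_{\mathrm{HL}}$ one gets $w(\Omega_k)\le C\,4^{-k}w(B)$ (hence $w(\Omega_k)\to 0$), and $\Omega_0\subset CB$ follows from the weak-$L^1$ bound on $M_{\mathrm{HL}}$ together with $\operatorname{supp}a\subset B$. For each $k$, cover $\Omega_k$ by Whitney balls $\{B_{k,i}\}$ of bounded overlap with a subordinate partition of unity $\{\eta_{k,i}\}$, and form the classical splitting $a=g_k+b_k$ with $\|g_k\|_\infty\le C 2^k\alpha$ and $g_k\to a$ in $L^1(dw)$. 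Telescoping,
\[
  a=g_0+\sum_{k\ge 0}(g_{k+1}-g_k),
\]
and a pointwise check on $\Omega_k^c$, $\Omega_k\setminus\Omega_{k+1}$ and $\Omega_{k+1}$ separately yields $|g_{k+1}-g_k|\le C 2^k\alpha$, $\operatorname{supp}(g_{k+1}-g_k)\subset\Omega_k$, and $\int(g_{k+1}-g_k)\,dw=0$. I then break up $g_{k+1}-g_k$ by $\{\eta_{k,i}\}$ and make each piece mean-zero by subtracting a multiple of $\chi_{B_{k,i}^\ast}/w(B_{k,i}^\ast)$; the accumulated correction has integral zero and is redistributed using chains of adjacent Whitney balls, exactly as in the proof of Proposition~\ref{propo:main}. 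This expresses $g_{k+1}-g_k=\sum_i\lambda_{k,i}\widetilde a_{k,i}$ for genuine $(1,\infty)$-atoms $\widetilde a_{k,i}$ supported in fixed dilates of the $B_{k,i}$ with $|\lambda_{k,i}|\le C 2^k\alpha\,w(B_{k,i}^\ast)$. The leading term $g_0$---bounded by $C\alpha$, supported in $CB$, and of zero integral---is already a bounded multiple of a $(1,\infty)$-atom.

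\textbf{Summation and main obstacle.} Using the bounded overlap of the Whitney cover,
\[
  \sum_{k,i}|\lambda_{k,i}|\le C\sum_k 2^k\alpha\,w(\Omega_k)\le C\sum_k 2^{-k}<\infty,
\]
which gives the decomposition with $\sum_j|\lambda_j|\le C$. The only really delicate ingredient is the chain-of-balls redistribution used to turn each partition-of-unity piece into a mean-zero atom without enlarging supports beyond fixed dilates of the Whitney balls; this is the exact same device employed just above in the proof of Proposition~\ref{propo:main}, so no new technology is required beyond the doubling property \eqref{eq:doubling} and the standard Whitney/maximal-function toolkit on spaces of homogeneous type.
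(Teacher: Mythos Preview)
Your strategy (Calder\'on--Zygmund decomposition at geometrically increasing heights, \`a la Latter) is a legitimate route to the result, but it is \emph{not} the one taken in the paper, and one step of your sketch does not go through as written.

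\textbf{Comparison with the paper's argument.} The paper proceeds much more simply. It performs a \emph{single} Calder\'on--Zygmund decomposition of $|a|^2$ at height $\lambda=\varepsilon^{-2}w(Q)^{-2}$ on the cube $Q$ supporting $a$. The ``good'' part produced is already a bounded multiple of a $(1,\infty)$-atom on $Q$; the ``bad'' part is a sum $\sum_j a_j$ of $(1,2)$-atoms supported on the CZ cubes $Q_j$, with $\sum_j\|a_j\|_{H^1_{(1,2)}}\le \tfrac12$. Iterating this step gives the decomposition with geometric control on the coefficients. No Whitney covers, no maximal-function level sets, and no cancellation redistribution are needed. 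Your approach, by contrast, sets up the full level-set machinery $\Omega_k=\{M_{\mathrm{HL}}a>2^k\alpha\}$ with Whitney balls and partitions of unity; this works in principle but is considerably heavier for the present purpose.

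\textbf{The gap.} The problematic step is your claim that after localising $g_{k+1}-g_k$ by the partition of unity $\{\eta_{k,i}\}$ and subtracting means, the leftover correction $\sum_i c_{k,i}\,\chi_{B_{k,i}^\ast}/w(B_{k,i}^\ast)$ can be ``redistributed using chains of adjacent Whitney balls, exactly as in the proof of Proposition~\ref{propo:main}''. In Proposition~\ref{propo:main} that chain argument works only because Lemma~\ref{lem:L^2} supplies the \emph{quadratic} decay $|c_j|\lesssim r^2/\|\sigma_j(\mathbf y_0)-\mathbf y_0\|^2$, so that $|c_j|$ times the chain length $m_j\sim\|\sigma_j(\mathbf y_0)-\mathbf y_0\|/r$ is summable. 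For Whitney balls of a generic open set $\Omega_k$ you only have $|c_{k,i}|\le C\,2^k\alpha\,w(B_{k,i})$ and no control on chain lengths $L_i$; the resulting sum $\sum_i|c_{k,i}|\,L_i$ need not be bounded by $C\,2^k\alpha\,w(\Omega_k)$. The standard remedy in the Latter-type argument is \emph{not} a chain at a fixed level $k$, but the nested relation between the Whitney decompositions of $\Omega_k$ and $\Omega_{k+1}\subset\Omega_k$: one writes each piece of $b_k-b_{k+1}$ on $B_{k,i}$ in terms of the smaller balls $B_{k+1,j}\subset B_{k,i}^\ast$, arranging cancellation intrinsically rather than transporting it along chains. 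If you want to keep your framework, that is the mechanism to invoke; otherwise, the paper's one-step CZ iteration is both shorter and self-contained.
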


\begin{proof}
Fix a $(1,2)$-atom $a(\mathbf{x})$. Since the measure $dw$ is doubling, without loosing of generality we can assume that $a(\mathbf x)$ is associated with a cube $Q$, i.e.
\begin{equation}
\label{eq:atom_with_cube}
\supp a \subset Q, \qquad \|a\|_{L^2(dw)}\leq w(Q)^{-1/2}, \qquad \int_{\mathbb{R}^N}a\,dw=0.
\end{equation}
Clearly, there is a constant $C_1>1$, which depends on the doubling constant and $N$, such that
$w(Q)\leq C_1 w(Q')$, where $Q'$ is any sub-cube of $Q$, $\ell (Q')=\ell (Q)\slash 2$, where $\ell (Q)$ denote the side length of $Q$.
Form the Calder\'on--Zygmund decomposition of  $|a|^2$ at height $\lambda=\varepsilon^{-2}w(Q)^{-2}$, where $\varepsilon=4^{-1}C_1^{-1/2}$.  This yields a sequence of disjoint  cubes $Q_j\subseteq Q$ such that
\begin{equation}
\label{eq:Calderon_Zygmund_properties}
w(Q_j) \leq \lambda^{-1}\int_{Q_j}|a(\mathbf{x})|^2\,dw(\mathbf{x}) <C_1 w(Q_j),
\end{equation}
\begin{equation}\label{eq:Calderon_Zygmund_properties_2}
|a(\mathbf{x})|^2 \leq \lambda \text{ for }\mathbf{x}\in \Omega=Q \setminus \bigcup_{j=1}^{\infty}Q_j.
\end{equation}
Set $a_{Q_j}=\frac{1}{w(Q_j)}\int_{Q_j}a(\mathbf{x})\,dw(\mathbf{x})$.
We write
\begin{align*}
a=\sum_{j=1}^{\infty}(a-a_{Q_j})\chi_{Q_j}+\Big(a\chi_{\Omega}+\sum_{j=1}^{\infty}a_{Q_j}\chi_{Q_j}\Big)=\sum_{j=1}^{\infty}a_j+b_1.
\end{align*}
We first prove that $b_1$ is a multiple of a $(1,\infty)$-atom associated with $Q$. Clearly, $\supp b_1 \subset Q$ and $\int_{Q}b_1=0$. Moreover,
\begin{align*}
|a_{Q_j}|=\left|\frac{1}{w(Q_j)}\int_{Q_j}a(\mathbf{x})\,dw(\mathbf{x})\right| \leq C_1^{1/2}\lambda^{1/2}=C_1^{1/2}\varepsilon^{-1}w(Q)^{-1}.
\end{align*}
Therefore, by~\eqref{eq:Calderon_Zygmund_properties_2} and~\eqref{eq:Calderon_Zygmund_properties},
\begin{align*}
|b_1(\mathbf{x})|\leq \varepsilon^{-1}w(Q)^{-1}+C_1^{1/2}\frac{1}{\varepsilon w(Q)}\leq (1+C_1^{1\slash 2})\varepsilon^{-1} w(Q)^{-1}=C_2 w(Q)^{-1}.
\end{align*}
Next, we show that $a_j$ is a multiple of a $(1,2)$-atom associated with $Q_j$. Obviously, $\supp a_j \subset Q_j$ and $\int_{Q_j}a_j=0$. Furthermore, by~\eqref{eq:Calderon_Zygmund_properties},
\begin{align*}
\|a_j\|_{L^2(dw)} \leq 2C_1^{1/2}\lambda^{1/2}w(Q_j)^{1/2}=2C_1^{1/2}\varepsilon^{-1}w(Q)^{-1}w(Q_j)^{1/2},
\end{align*}
which implies
\begin{align*}
\|a_j\|_{H^1_{(1,2)}}\leq 2C_1^{1/2}\varepsilon^{-1}w(Q)^{-1}w(Q_j).
\end{align*}
Finally, note that by~\eqref{eq:atom_with_cube} and~\eqref{eq:Calderon_Zygmund_properties},
\begin{align*}
\sum_{j=1}^{\infty}w(Q_j) \leq \lambda^{-1}\sum_{j=1}^{\infty}\int_{Q_j}|a(\mathbf{x})|^2\,dw(\mathbf{x})\leq \lambda^{-1}\|a\|^{2}_{L^2(dw)} \leq \frac{1}{\lambda w(Q)}=\varepsilon^{2}w(Q),
\end{align*}
which implies
\begin{align*}
\sum_{j=1}^{\infty}\|a_j\|_{H^1_{(1,2)}}\leq 2C_1^{1/2}\varepsilon^{-1}w(Q)^{-1}\sum_{j=1}^{\infty}w(Q_j) \leq 2C_1^{1/2}\varepsilon = 1\slash 2.
\end{align*}
 We repeat the argument above with $a_j$ in place of $a$. Iterating this procedure, we obtain a representation $a(\mathbf{x})=\sum_{j=1}^{\infty}\lambda_jb_j(\mathbf{x})$, where $b_j$ are $(1,\infty)$-atoms and $\sum_{j=1}^{\infty}|\lambda_j| \leq 2C_2$.
\end{proof}

\subsection*{Acknowledgements}
The authors want to thank Jean-Philippe Anker, Detlef M\"uller, Margit R\"osler, and Michael Voit for conversations on the subject of the paper. The authors are also greatly indebted to the
referees for  helpful comments which have improved the presentation of the paper.

\end{document}